\numberwithin{equation}{section}
\numberwithin{figure}{section}
\theoremstyle{plain}
\newtheorem{thm}{\protect\theoremname}[section]
  \theoremstyle{remark}
  \newtheorem{rem}[thm]{\protect\remarkname}
  \theoremstyle{plain}
  \newtheorem{cor}[thm]{\protect\corollaryname}
  \theoremstyle{plain}
  \newtheorem{prop}[thm]{\protect\propositionname}
  \theoremstyle{plain}
  \newtheorem{lem}[thm]{\protect\lemmaname}
\newcommand{\FigBesBeg}[1][1.0]{%
 \let\MyFigure\figure
 \let\MyEndfigure\endfigure
 \renewenvironment{figure}[1]{\begin{SCfigure}[#1]##1}{\end{SCfigure}}}
\newcommand{\FigBesEnd}{%
 \let\figure\MyFigure
 \let\endfigure\MyEndfigure}
\newcommand{\F}{{\mathbf{F}}}
\theoremstyle{plain}
\newtheorem{ques}[thm]{Question}
\theoremstyle{plain}
\setlist{leftmargin=*}
  \providecommand{\corollaryname}{Corollary}
  \providecommand{\lemmaname}{Lemma}
  \providecommand{\propositionname}{Proposition}
  \providecommand{\remarkname}{Remark}
\providecommand{\theoremname}{Theorem}
\begin{document}

\title{Growth of Primitive Elements in Free Groups}

\author{D. Puder%
\thanks{Supported by the Adams Fellowship Program of the Israel Academy of
Sciences and Humanities, and by the ERC.%
}~~ and~~ C. Wu}
\maketitle
\begin{abstract}
In the free group $F_{k}$, an element is said to be primitive if
it belongs to a free generating set. In this paper, we describe what
a generic primitive element looks like. We prove that up to conjugation,
a random primitive word of length $N$ contains one of the letters
exactly once asymptotically almost surely (as $N\to\infty$).

This also solves a question from the list `Open problems in combinatorial
group theory' {[}Baumslag-Myasnikov-Shpilrain 02'{]}. Let $p_{k,N}$
be the number of primitive words of length $N$ in $F_{k}$. We show
that for $k\geq3$, the exponential growth rate of $p_{k,N}$ is $2k-3$.
Our proof also works for giving the exact growth rate of the larger
class of elements belonging to a proper free factor.
\end{abstract}
2010 Mathematics Subject Classifi{}cation: 20E05 (Primary) 05A16 (Secondary)

\tableofcontents{}

\section{Introduction\label{sec:Introduction}}

Let $F_{k}$ be the free group on $k$ generators $X=\left\{ x_{1},\ldots,x_{k}\right\} $
($k\geq2$). Elements in $F_{k}$ are represented by reduced words
in the alphabet $X^{\pm1}=\left\{ x_{1,}^{\pm1}x_{2,}^{\pm1}\cdots,x_{k}^{\pm1}\right\} $.
A word $w\in F_{k}$ is called \emph{primitive} if it belongs to some
free generating set\textbf{. }We let $P_{k,N}$\marginpar{$P_{k,N}$}
denote the set of primitive elements of word length $N$ in $F_{k}$.
It is known (see, for example, \cite{BMS02}) that as $N\to\infty$
the set of primitive words is exponentially small in $F_{k}$. Namely,
the exponential growth rate\marginpar{$\rho_{k}$} 
\[
\rho_{k}\stackrel{\mathrm{{\scriptstyle def}}}{=}\limsup_{N\to\infty}\sqrt[N]{\left|P_{k,N}\right|}
\]
is strictly smaller than that of the whole free group $F_{k}$, which
is $2k-1$. As observed in \cite{rivin2004remark}, $\rho_{2}=\sqrt{3}$,
which gives the only case where the growth rate is known. For $k\geq3$,
various upper bounds on $\rho_{k}$ have been established \cite{BV02counting,BMS02,shpilrain2005counting}.
The best upper bound to date is due to Shpilrain \cite{shpilrain2005counting}
who showed $\rho_{k}\leq\lambda_{k}$, where $\lambda_{k}$ is the
greatest real root of $\lambda\left(\lambda^{2}-1\right)\left(\lambda-\left(2k-2\right)\right)+1$.
Here $\lambda_{k}<2k-2$ for each $k$, but $\lambda_{k}$ approaches
$2k-2$ in the limit. A simple lower bound of $\rho_{k}\geq2k-3$
stems from the fact that every word of the form $x_{1}w(x_{2},x_{3},\cdots,x_{k})$,
where $w$ is a word of length $N-1$ in $\left\{ x_{2}^{\pm1},\ldots,x_{k}^{\pm1}\right\} $,
forms a free generating set together with $\left\{ x_{2},x_{3},\cdots,x_{k}\right\} $,
hence is primitive. 

The exact value of $\rho_{k}$ is the content of one of the open questions,
attributed to M. Wicks, in \cite[Problem F17]{baumslag2002open} (see
also the active website \cite[Problem F19]{OpenProblems}). Here we
answer the question and show the following tight result:
\begin{thm}
\label{thm:growth-of-prims}For all $k\geq3$, 
\[
\rho_{k}=\lim_{N\to\infty}\sqrt[N]{\left|P_{k,N}\right|}=2k-3.
\]
Moreover, there are positive constants $c_{k}$ and $C_{k}$ such
that 
\[
c_{k}\cdot N\cdot\left(2k-3\right)^{N}\leq\left|P_{k,N}\right|\leq C_{k}\cdot N\cdot\left(2k-3\right)^{N}.
\]
\end{thm}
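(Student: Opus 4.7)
\emph{Lower bound.} For each reduced word $w$ of length $N-1$ in the subalphabet $\{x_{2}^{\pm1},\ldots,x_{k}^{\pm1}\}$---there are $(2k-2)(2k-3)^{N-2}$ of them---the word $x_{1}w$ is cyclically reduced of length $N$ and primitive, since $\{x_{1}w,x_{2},\ldots,x_{k}\}$ is a basis of $F_{k}$ via a Nielsen transformation. Its $N$ cyclic rotations are pairwise distinct reduced primitives (the unique occurrence of $x_{1}$ pins down each rotation), and different pairs $(j,w)$ produce different rotations, so $|P_{k,N}|\geq N(2k-2)(2k-3)^{N-2}$, yielding the claimed lower bound.

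\emph{Upper bound---reduction.} Every $u\in P_{k,N}$ factors uniquely as $u=g\tilde{u}g^{-1}$ with $\tilde{u}$ cyclically reduced of length $m\leq N$, $m\equiv N\pmod{2}$, and $|g|=(N-m)/2$ without cancellation. Conjugation preserves primitivity, so $\tilde{u}$ is primitive. Writing $q_{m}$ for the number of cyclically reduced primitives of length $m$, the number of valid conjugators of length $n$ is $O((2k-1)^{n})$, hence
\[
|P_{k,N}|\;\leq\;\sum_{m\leq N,\,m\equiv N\,(\mathrm{mod}\,2)} q_{m}\cdot O\bigl((2k-1)^{(N-m)/2}\bigr).
\]
For $k\geq3$ one easily checks $(2k-3)^{2}>2k-1$, so once the estimate $q_{m}\leq Cm(2k-3)^{m}$ is established, the right-hand side becomes a geometric series dominated by its $m=N$ term, giving $|P_{k,N}|\leq C_{k}N(2k-3)^{N}$.

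\emph{Main obstacle: bounding $q_{m}$.} I split cyclically reduced primitives of length $m$ according to the multiplicities $m_{i}:=|\tilde{u}|_{x_{i}}+|\tilde{u}|_{x_{i}^{-1}}$: Class~(A) has some $m_{i}=1$, and Class~(B) has every $m_{i}\in\{0\}\cup[2,\infty)$. Class~(A) is bounded without using primitivity: choose the distinguished letter and its sign ($\leq 2k$ options), its cyclic position ($m$ options), and the remaining reduced word of length $m-1$ in $F_{k-1}$ ($O((2k-3)^{m})$ options), for a total of $O(m(2k-3)^{m})$. Class~(B) is the real difficulty: without primitivity its count is $\Theta((2k-1)^{m})$, and the primitivity constraint must cut this down to $O(m(2k-3)^{m})$---this is where the abstract's ``random primitive contains one letter exactly once'' assertion lives. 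My plan is to invoke Whitehead's theorem: a cyclically reduced primitive of length $\geq 2$ has a Whitehead graph admitting a cut vertex, corresponding to a length-reducing Whitehead automorphism. In Class~(B), every non-isolated vertex of the Whitehead graph has degree $\geq 4$, which severely restricts the admissible graph types; enumerating these finitely many types and counting the reduced cyclic words realizing each should yield the desired bound. An alternative route, more in line with the paper's extension to elements of proper free factors, uses the Stallings graph of $\langle\tilde{u}\rangle$ (an $m$-cycle that must sit inside the rose as a free factor of $F_{k}$) and exploits the rigidity of edge-labelings compatible with both the free-factor condition and the Class~(B) multiplicity constraint.
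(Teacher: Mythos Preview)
Your lower bound and the reduction to cyclically reduced primitives are correct and match the paper's Proposition~\ref{prop:prim-to-conj} exactly. Your Class~(A)/Class~(B) split is precisely the paper's split $C_{k,N}=L_{k,N}\cup(C_{k,N}\setminus L_{k,N})$, and your Class~(A) count is fine.

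The genuine gap is Class~(B). First, a factual error: in the Whitehead graph of a cyclic word, the degree of the vertex $x_{j}$ (and of $x_{j}^{-1}$) equals $m_{j}$, not $2m_{j}$; so in Class~(B) the non-isolated vertices have degree~$\geq 2$, not~$\geq 4$. This is a much weaker constraint than you assert.

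More importantly, your plan of ``enumerate the finitely many cut-vertex types and count the cyclic words realising each'' cannot work as stated, because for several of those types the count of \emph{all} compatible cyclic words already exceeds $(2k-3)^{m}$. Concretely (see the discussion after Corollary~\ref{cor:Every-triplet-is-less-than-2k-1}), if the cut vertex $v$ separates off a single vertex $Y=\{x\}$, or $Y=\{x,x^{-1}\}$, the set $A_{Y,Z,v}$ has exponential growth rate \emph{strictly larger} than $2k-3$; and these cases are not excluded by the Class~(B) hypothesis. So primitivity must be used beyond merely forcing a cut vertex. The paper handles this by (i) showing all \emph{other} triplets have growth $<2k-3$ (Proposition~\ref{prop:Every-triplet-except...-is-negligible}, a lengthy case analysis), and (ii) for the exceptional triplets, repeatedly applying the length-reducing Whitehead automorphism $\phi_{Y,Z,v}$ and controlling the resulting trajectory---this requires induction on $k$, a separate bound on words that land in $L_{k,*}$ after one step (Lemma~\ref{clm:growth-of-LkN}), and at one point the measure-theoretic fact that if $w_{1}w_{2}$ is primitive with $w_{1},w_{2}$ letter-disjoint then one of them is primitive (Proposition~\ref{prop: w1w2-prim-then-one-of-them-is-prim}). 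None of this machinery is present or hinted at in your sketch; your Stallings-graph alternative is likewise only a hope, not an argument.
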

\begin{rem}
The second statement of Theorem \ref{thm:growth-of-prims} can be
sharpened to $\left|P_{k,N}\right|=\left(1+o_{N}\left(1\right)\right)\cdot\widehat{C_{k}}\cdot N\cdot\left(2k-3\right)^{N}$
for a specific constant $\widehat{C_{k}}$ which can be computed.
This can be inferred from Theorem \ref{thm:growth-of-cyclic-prims}
and the analysis in Proposition \ref{prop:prim-to-conj} below.
\end{rem}
The above theorem follows from an analysis of conjugacy classes of
primitives in free groups. A word $w=a_{1}a_{2}\cdots a_{N}$ is called
\emph{cyclically reduced} if $a_{1}\neq a_{N}^{-1}$. Such words,
up to a cyclic permutation of letters, uniquely represent conjugacy
classes in $F_{k}$. Hence for $w\in F_{k}$ we call the conjugacy
class $[w]$\marginpar{$[w]$} \emph{the cyclic word }associated with\emph{
$w$}. Let the \emph{cyclic length} of $w$, denoted by $\left|w\right|_{c}$\marginpar{$\left|w\right|_{c}$},
be the length of the cyclically reduced representatives of $[w]$.

There is a stark difference between the behavior of $P_{2,N}$ and
that of $P_{k,N}$ when $k\geq3$: whereas in $F_{2}$ `most' long
primitives are conjugates of short ones, it turns out that for higher
rank free groups the generic primitive word is nearly cyclically-reduced.
In particular, the growth of the set of primitive elements is the
same as that of primitive conjugacy classes (cyclic words) with respect
to cyclic length. (This is the content of Proposition \ref{prop:prim-to-conj}
below.)

Consider the set \marginpar{$C_{k,N}$}
\[
C_{k,N}=\left\{ \left[w\right]\,\middle|\, w\in F_{k}\,\,\mathrm{is\,\, primitive\,}\,\mathrm{and}\,\,\left|w\right|_{c}=N\right\} .
\]
We compare the size of $C_{k,N}$ with its subset of cyclic-words
in which some letter $x\in X$ appears exactly once (either as itself
or its inverse), namely the set \marginpar{$L_{k,N}$}
\[
L_{k,N}=\left\{ \left[w\right]\,\middle|\,\mathrm{some}\, x\in X\,\mathrm{appears\,\, in}\,\, w\,\,\mathrm{exactly\,\, once}\right\} \subseteq C_{k,N}.
\]
The size of $L_{k,N}$ can be easily approximated as%
\footnote{This expression is very close to the truth, except that we double
count words in which two or more letters appear exactly once. The
exact cardinality of $L_{k,N}$ can be obtained by an application
of the inclusion-exclusion formula. Note that the share of doubly-counted
words is exponentially negligible in $L_{k,N}$: it is of exponential
order $\left(2k-5\right)^{N}$.%
} $\left|L_{k,N}\right|\approx2k\left(2k-2\right)\left(2k-3\right)^{N-2}.$
So that 
\[
\limsup_{N\to\infty}\sqrt[N]{\left|L_{k,N}\right|}=2k-3.
\]

\begin{thm}
\label{thm:growth-of-cyclic-prims}For $k\geq2$
\[
\limsup_{N\to\infty}\sqrt[N]{\left|C_{k,N}\right|}=2k-3.
\]
For $k\geq3$,
\[
\limsup_{N\to\infty}\sqrt[N]{\left|C_{k,N}\setminus L_{k,N}\right|}<2k-3.
\]
Moreover, 
\[
\left|C_{k,N}\right|=\left(1+o_{N}\left(1\right)\right)\cdot\frac{2k\left(2k-2\right)}{\left(2k-3\right)^{2}}\left(2k-3\right)^{N}.
\]
 
\end{thm}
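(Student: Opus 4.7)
The plan is to pin down $|C_{k,N}|$ by sandwiching it between $|L_{k,N}|$, which the excerpt evaluates explicitly via inclusion--exclusion, and $|L_{k,N}| + |C_{k,N} \setminus L_{k,N}|$, the second summand being shown to be exponentially negligible. The first step is to verify the containment $L_{k,N} \subseteq C_{k,N}$ already asserted in the excerpt: given $[w] \in L_{k,N}$, choose a cyclic representative of the form $w = x_i v$ with $v$ a reduced word not involving $x_i^{\pm 1}$; then the Nielsen transformation sending $x_i \mapsto w$ and fixing the remaining generators is an automorphism of $F_k$, so $w$ is primitive. Combined with the approximation $|L_{k,N}| \approx 2k(2k-2)(2k-3)^{N-2}$ recorded in the excerpt, this already gives the lower bounds $\limsup \sqrt[N]{|C_{k,N}|} \geq 2k-3$ and $|C_{k,N}| \geq (1+o_N(1)) \cdot \frac{2k(2k-2)}{(2k-3)^2}(2k-3)^N$ for $k \geq 3$.

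The main obstacle, and the step that carries the bulk of the technical work, is the upper estimate $\limsup \sqrt[N]{|C_{k,N} \setminus L_{k,N}|} < 2k-3$. My approach is via Whitehead's algorithm: any cyclically reduced primitive of length greater than one admits a Whitehead automorphism $\sigma$ that strictly shortens its cyclic length. For $[w] \in C_{k,N} \setminus L_{k,N}$ every generator appears at least twice, so the Whitehead graph $\Gamma(w)$ on $X^{\pm 1}$ (with one edge for each consecutive pair of letters in the cyclic word) is ``filled'' enough that any compressing Whitehead move must eliminate a substantial number of letters rather than a single one. I would then decompose $C_{k,N} \setminus L_{k,N}$ according to the first Whitehead move in the reduction: each $[w]$ arises by a reverse Whitehead move from a shorter primitive $[w']$ of cyclic length $N - d$ for some $d \geq 1$, with a uniformly bounded number of extensions per $[w']$ and per $d$. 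An inductive or transfer-matrix estimate over this branching would then force the total exponential rate strictly below $2k-3$.

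Combining the two bounds yields $|C_{k,N}| = (1+o_N(1)) \cdot \frac{2k(2k-2)}{(2k-3)^2}(2k-3)^N$ for $k \geq 3$. The case $k=2$ of the first lim-sup says that cyclic primitives grow subexponentially ($2k-3 = 1$); this is consistent with the excerpt's remark that in $F_2$ most long primitives are conjugates of short ones, and should follow either from the same Whitehead analysis specialized to $F_2$ or, more directly, from the classical enumeration of primitive conjugacy classes in $F_2$ via a continued-fraction parametrization, whose count per cyclic length is polynomial.

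I expect the central difficulty to be a \emph{uniform quantitative} Whitehead compression bound: Whitehead's theorem only guarantees some strict length decrease, while our argument needs the gain to be uniform over all Whitehead-minimal primitives in which no letter appears exactly once, so that the resulting combinatorial count beats $(2k-3)^N$ by a definite exponential factor. Controlling this uniformly is what distinguishes the statement from the qualitative observation that generic primitives belong to $L_{k,N}$.
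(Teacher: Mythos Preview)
Your plan is broadly on the same track as the paper --- lower bound via $L_{k,N}$, upper bound via Whitehead --- but the ``reverse Whitehead move with bounded extensions'' recursion, as stated, does not close. The number of Whitehead automorphisms is roughly $2k\cdot 2^{2k-2}$ (a choice of $v\in X^{\pm1}$ and a subset $Y\subseteq X^{\pm1}\setminus\{v,v^{-1}\}$), which vastly exceeds $2k-3$. So if the guaranteed shortening is only $d\ge 1$, the recursion $|C_{k,N}\setminus L_{k,N}|\le W\cdot\sum_{n<N}|C_{k,n}|$ gives nothing better than growth $2k-3$; you cannot beat $(2k-3)^N$ without either drastically shrinking the relevant pool of automorphisms or forcing the shortening to be much larger than $1$. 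You correctly flag this as the central difficulty, but the proposal offers no mechanism to resolve it.

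The paper's resolution has several ingredients you are missing. First, it classifies the possible cut-vertex triplets $(Y,Z,v)$ and shows (Proposition~\ref{prop:Every-triplet-except...-is-negligible}) that unless $\min(|Y|,|Z|)=1$ or $Y=\{x,x^{-1}\}$, the \emph{entire} set $A_{Y,Z,v}$ --- not just the primitives in it --- already grows slower than $(2k-3)^N$. This reduces the problem to only $6k(k-1)$ Whitehead automorphisms. Second, within these few triplets it isolates a set $M_{k,N}$ of cyclic primitives where the Whitehead move shortens by at least $4$; since $\sqrt[4]{6k(k-1)}<2k-3$ for $k\ge 3$, the recursion now works on $M_{k,N}$. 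Third --- and this is the step furthest from your outline --- the complement $M_{k,N}^{c}$ (primitives where the available shortening is small) is handled case by case using \emph{induction on $k$} together with a measure-theoretic fact (Proposition~\ref{prop: w1w2-prim-then-one-of-them-is-prim}, derived from the characterization of primitives via word measures on $S_n$ in \cite{PP15}): if $w_1,w_2$ are letter-disjoint and $w_1w_2$ is primitive, then one of the $w_i$ is primitive. This last input is what lets one pass from primitives in $F_k$ with few occurrences of some letter to primitives in $F_{k-1}$, and there is no hint of it in your proposal. The $k=2$ base case is quoted from the literature, as you suggest.
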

The second statement of theorem \ref{thm:growth-of-cyclic-prims}
means that except for an exponentially small set, all primitive cyclic-words
contain one of the letters exactly once. When $k\geq3$ the first
and last statements are an immediate consequence of the second one
and the approximated size of $L_{k,N}$ as given above.

Note that the first statement of theorem \ref{thm:growth-of-cyclic-prims}
is also valid for $k=2$: the exponential growth rate of conjugacy
classes of primitives in $F_{2}$ is 1. This special case was already
shown in \cite[Prop 1.4]{myasnikov2003automorphic}: it turns out
the size of $C_{2,N}$ is exactly $4\varphi\left(n\right)$, where
$\varphi\left(\cdot\right)$ is the Euler function. Whereas $\rho_{2}=\sqrt{3}$
is strictly larger than $1$, for all $k\geq3$ the growth of primitive
cyclic-words is the same as the growth of primitive words. 

A natural question along the same vein would be to estimate the growth
of the larger set $S_{k,N}$\marginpar{$S_{k,N}$} consisting of words
in $F_{k}$ which are contained in a proper free factor (clearly,
$P_{k,N}\subseteq S_{k,N}$). Our proof of Theorem \ref{thm:growth-of-prims}
also applies to this question and yields that $S_{k,N}$ has the same
exponential growth rate as $P_{k,N}$: 
\begin{cor}
\label{cor:growth-of-proper-free-factors}For $k\geq3$ we have
\[
\lim_{N\to\infty}\sqrt[N]{\left|S_{k,N}\right|}=\lim_{N\to\infty}\sqrt[N]{\left|S_{k,N}\setminus P_{k,N}\right|}=2k-3.
\]

\end{cor}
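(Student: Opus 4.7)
Since $P_{k,N}\subseteq S_{k,N}$, Theorem~\ref{thm:growth-of-prims} already yields $\liminf_{N}\sqrt[N]{|S_{k,N}|}\ge 2k-3$; the matching lower bound for $|S_{k,N}\setminus P_{k,N}|$ comes from the proper free factor $H=\langle x_2,\ldots,x_k\rangle\cong F_{k-1}$. The set of reduced words of length $N$ in $H$ has cardinality $(2k-2)(2k-3)^{N-1}$, all of them lie in $S_{k,N}$, and since primitivity in a free factor equals primitivity in the ambient group, such a word belongs to $P_{k,N}$ iff it is primitive in $F_{k-1}$. By Theorem~\ref{thm:growth-of-prims} applied to $F_{k-1}$ when $k\ge 4$, or by the classical $\rho_{2}=\sqrt{3}$ when $k=3$, the primitive part is exponentially smaller than $(2k-3)^{N}$, and the rest yields the desired lower bound for $|S_{k,N}\setminus P_{k,N}|$.

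\textbf{Matching upper bound.} The remaining task is to show $|S_{k,N}|\le C_k\cdot N\cdot(2k-3)^{N}$ by adapting the proof of Theorem~\ref{thm:growth-of-prims}. The idea is to run the same argument, but replace ``rank-one free factor'' with ``proper free factor of any rank $1\le r\le k-1$'' as the target of the containment. In terms of the Stallings/core-graph machinery signalled by the $\covers$ partial order, this means asking that the cyclic core graph of $w$ cover the core graph of \emph{some} proper free factor, not merely a rank-one one. The expected dichotomy, analogous to Theorem~\ref{thm:growth-of-cyclic-prims}, is that a cyclic word $w$ lying in a proper free factor either already misses some letter of $X$ (contributing $\Theta((2k-3)^{N})$) or has cyclic core graph supported on a free factor of rank $\le k-2$, which contributes only $O((2k-5)^{N})$ and is thus negligible. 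The conversion from cyclic-word counts to word counts, costing a factor of $N$, should go through exactly as in Proposition~\ref{prop:prim-to-conj}, since that argument is not special to primitivity.

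\textbf{Main obstacle.} The essential new difficulty, absent in the primitive case because the minimal free factor containing a primitive $w$ is just $\langle w\rangle$, is that a typical element of $S_{k,N}$ lies in many proper free factors of various ranks at once. One must therefore organize the enumeration around a canonical representative -- most naturally the minimal free factor containing $w$, or equivalently the minimal core graph onto which the cyclic core graph of $w$ maps under $\covers$. Verifying that this canonical object exists, is well-behaved under the covering relation, and that the fibers of the resulting parameterization are polynomial in $N$ -- so that the total count is not inflated beyond the $N\cdot(2k-3)^{N}$ budget -- is where I expect the real technical work to lie.
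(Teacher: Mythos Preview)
Your lower bound is correct and coincides with the paper's: words in $H=\langle x_2,\ldots,x_k\rangle$ all lie in $S_{k,N}$, and since primitivity in a free factor is equivalent to primitivity in the ambient group, the non-primitive ones give $\gtrsim(2k-3)^N$ elements of $S_{k,N}\setminus P_{k,N}$.

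Your upper bound plan, however, rests on a dichotomy that is false. You claim a cyclic word in a proper free factor either misses a letter or lies in a free factor of rank $\le k-2$. Take $k=3$ and the rank-$2$ free factor $J=\langle x_1x_2,\,x_3\rangle$ (complemented by $\langle x_2\rangle$). The cyclic word $[(x_1x_2)^2x_3^{\,2}]=[x_1x_2x_1x_2x_3x_3]$ uses all three letters, and since $a^2b^2$ lies in no proper free factor of $F_2$, its minimal free factor in $F_3$ is $J$ itself, of rank $k-1=2$. More generally, applying any automorphism to a word supported on $k-1$ letters produces elements of $S_{k,N}$ that typically use all $k$ letters and have minimal free factor of rank $k-1$; these are far too numerous to be absorbed into an $O((2k-5)^N)$ term. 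The parameterization by minimal free factor that you propose runs into the problem that there are infinitely many rank-$(k-1)$ free factors, and you give no mechanism to control the sum over them.

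The paper's route is quite different and avoids core graphs entirely. The key observation is that Theorem~\ref{thm:in-free-factor-then-cut-vertex} (Stallings) applies to \emph{any} word in a proper free factor, not only to primitives: the Whitehead graph of such a word has a cut vertex. Hence every $[w]\in\overline{S_{k,N}}$ lies in some $A_{Y,Z,v}^N$, and Proposition~\ref{prop:Every-triplet-except...-is-negligible} disposes of all triplets except those with $\min(|Y|,|Z|)=1$ or $Y=\{x,x^{-1}\}$. For these remaining triplets one repeats, in simplified form, the analysis of Section~\ref{sub:Proof-of-Theorem}: the case $Z=\{v^{-1}\}$ already has growth exactly $2k-3$; the cases $Y=\{x\}$ with at most three occurrences of $x^{\pm1}$, or $Y=\{x,x^{-1}\}$ with one block $vx^mv^{-1}$, have growth $2k-3$ as well; and the rest form a set $Q_{k,N}$ on which the relevant Whitehead automorphism shortens by at least $4$, so the argument of Lemma~\ref{lem:growth-of-MkN} applies verbatim. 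The reduction from $S_{k,N}$ to cyclic words goes through exactly as in Proposition~\ref{prop:prim-to-conj}. In short: the upper bound requires no new enumeration scheme, only the observation that the Whitehead cut-vertex machinery was never specific to primitives.
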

We show that $\lim_{N\to\infty}\sqrt[N]{\left|S_{k,N}\right|}\leq2k-3$
in Section \ref{sub:The-growth-of-non-primitives-in-free-factors}.
This requires only a small variation on the proof of Theorem \ref{thm:growth-of-prims}.
The lower bound is, again, easier, and follows immediately from the
fact that primitives are exponentially negligible in $F_{k}$ (this
fact follows from Theorem \ref{thm:growth-of-prims} but also, as
mentioned above, from previous results concerning the growth of primitives).
Indeed, this fact shows that most words in any size $k-1$ subset
of the letters are non-primitive. We conclude that the number of non-primitive
words in $S_{k,N}$ grows at least as fast as $\left(2k-3\right)^{N}$\label{page:lower-bound-for-Skn}.
Thus $S_{k,N}$ is indeed larger than $P_{k,N}$ in a non-negligible
manner, namely, 
\[
\lim_{N\to\infty}\sqrt[N]{\left|S_{k,N}\setminus P_{k,N}\right|}\geq2k-3=\lim_{N\to\infty}\sqrt[N]{\left|P_{k,N}\right|}.
\]

\begin{rem}
For a different proof showing that $\lim_{N\to\infty}\sqrt[N]{\left|S_{k,N}\setminus P_{k,N}\right|}=2k-3$,
see \cite[Thm 8.2]{Pud14b} (in the terminology therein, every word
in $S_{k,N}\setminus P_{k,N}$ has primitivity rank $\leq k-1$).
In the techniques of that paper (especially \cite[Prop. 4.3]{Pud14b}),
it can be shown that a generic word in $S_{k,N}\setminus P_{k,N}$
is, up to conjugation, a word in some $\left(k-1\right)$-subset of
the letters of $X$.\\

\end{rem}
Our proofs rely on a thorough analysis of the Whitehead algorithm
to detect primitive elements. To a lesser extent, we also use a characterization
of primitive elements based on the distribution they induce on finite
groups. In Section \ref{sec:Whitehead} we give some background on
Whitehead algorithm and describe the graphs used in it, called Whitehead
graphs. We then divide the set of primitives into finitely many classes
according to certain properties of their Whitehead graphs. Most of
these classes turn out to be of negligible size, but we postpone the
somewhat technical proof of this fact to Section \ref{sec:Growth-of-partitions}.
In Section \ref{sec:exp-growth-rate} we give some background on the
aforementioned ``statistical'' characterization of primitives, estimate
the size of the remaining classes of primitives and complete the proofs
of Theorems \ref{thm:growth-of-prims}, \ref{thm:growth-of-cyclic-prims}
and of Corollary \ref{cor:growth-of-proper-free-factors}. We end
with some open questions in Section \ref{sec:Open-Questions}.

\section*{Acknowledgements}

We would like to thank Tsachik Gelander for bringing our attention
to the question. We thank Warren Dicks, Ilya Kapovich, Nati Linial,
Shahar Mozes and Alexey Talambutsa for beneficial comments. We also
thank the anonymous referee for his valuable comments. The second
author would like to thank the Hebrew University for providing hospitality
and stimulating mathematical environment during which part of this
work was conducted.

\section{Whitehead Graphs\label{sec:Whitehead}}

\noindent In \cite{Whi36a}, Whitehead introduced the first algorithm
to detect primitive words in $\F_{k}$ (and more generally subsets
of bases of $\F_{k}$). (Subsequently, in \cite{Whi36b}, he solved
a more general question: Given two words $w_{1},w_{2}\in F_{k}$,
when does there exist an automorphism $\phi\in\mathrm{Aut}\left(F_{k}\right)$
mapping $w_{1}$ to $w_{2}$? Note that $w_{1}$ is primitive if and
only if there is an automorphism mapping it to a single-letter word.)
Along the years it has become the most standard way of detecting primitive
elements. Stallings generalized the algorithm in order to detect words
belonging to free factors of $\F_{k}$ \cite{Sta99}. For other algorithms
to detect primitives see, e.g., \cite[Chapter I.2]{LS70} or \cite{Pud14a}.

The algorithm is based on the following construction: Let $M_{k}$
be a 3-manifold which is the connected sum of $k$ copies of $\mathbb{S}^{1}\times\mathbb{S}^{2}$.
Clearly, we have $\pi_{1}(M_{k})=F_{k}$. Fix a set of $k$ disjoint
2-spheres $S_{1},S_{2},\cdots,S_{k}$, one corresponding to each summand,
so that $\widehat{M}_{k}=M_{k}\backslash\bigcup_{i=1}^{k}S_{i}$ is
simply connected with $2k$ boundary components $S_{1}^{+},S_{1}^{-},S_{2}^{+},\cdots,S_{k}^{-}$.
The manifold $M_{k}$ may be visualized as the double of a handlebody
$H_{k}$ with $\left\{ S_{i}\right\} $ being the double of a cut
system of $H_{k}$ (a cut system is a set of disjoint discs that cuts
the handlebody into a ball). For every $w\in F_{k}=\pi_{1}(M_{k})$,
the cyclic word $\left[w\right]$ can be realized as a simple curve
in $M_{k}$. Conversely, given any oriented curve in $M_{k}$ one
can write down a cyclic word in $F_{k}$ by reading off the sequence
of spheres the curve intersects, with signs. Hence we get a bijective
correspondence between cyclic words $[w]$ and homotopy classes of
oriented simple curves in $M_{k}$. 

\begin{figure}[h]
\includegraphics[scale=0.72]{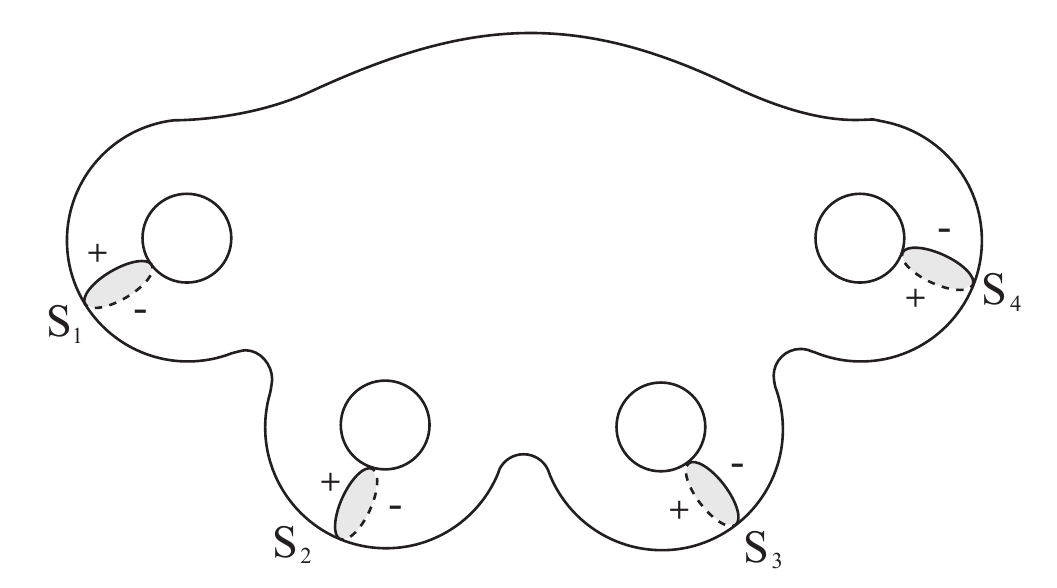}\includegraphics[scale=0.72]{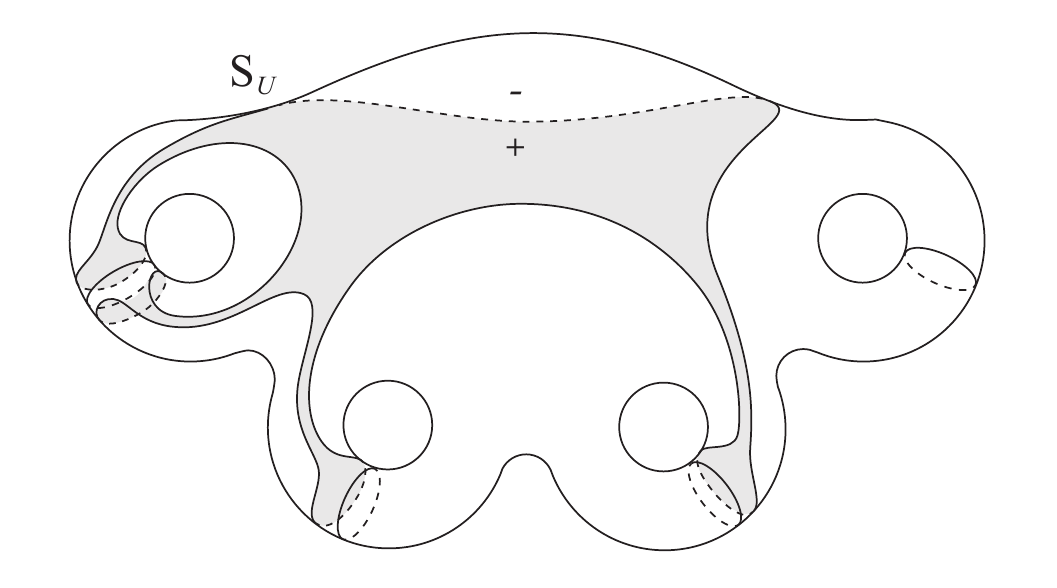}\caption{Spheres $S_{i}$ in $H_{k}$ and $S_{\mathcal{{U}}}$ with $\mathcal{{U}}=\left\{ S_{1}^{+},S_{1}^{-},S_{2}^{+},S_{3}^{-}\right\} $}
\end{figure}

Given any proper non-empty subset $\mathcal{U}\subset\left\{ S_{1}^{+},S_{1}^{-},\cdots,S_{k}^{-}\right\} $,
there is an embedded 2-sphere $S_{\mathcal{U}}$ in $\widehat{M}_{k}$
separating the boundary components in $\mathcal{{U}}$ from those
not in $\mathcal{{U}}$. For every $v\in X^{\pm1}$ denote by $S_{v}$
the corresponding boundary component of $\widehat{M}$ (so $S_{x_{i}}=S_{i}^{+}$
and $S_{x_{i}^{-1}}=S_{i}^{-}$). If there exists some $v=x_{j}^{\,\varepsilon}\in X^{\pm1}$
such that $S_{v}\notin\mathcal{{U}}$ and $S_{v^{-1}}\in\mathcal{{U}}$
then $S_{\mathcal{{U}}}$ is an essential non-separating sphere%
\footnote{Namely, a non-contractible embedding of a sphere which does not separate
$\widehat{M}_{k}\cup S_{j}$ into two connected components.%
} in $\widehat{M}_{k}\cup S_{j}$. The \textbf{Whitehead automorphism}
$\varphi_{(\mathcal{{U}},v)}$ of $F_{k}$ is then defined by replacing
the sphere $S_{j}$ by $S_{\mathcal{{U}}}$ and writing each cyclic
word as the intersection pattern of the corresponding curve with the
new set of spheres. In the example illustrated above, $S_{3}^{+}\notin\mathcal{{U}}$
and $S_{3}^{-}\in\mathcal{{U}}$ hence we may replace $S_{3}$ with
$S_{\mathcal{{U}}}$. Writing down $\varphi_{(\mathcal{{U}},v)}$
formally one gets:
\begin{itemize}
\item $\varphi_{(\mathcal{{U}},v)}(v)=v$ ; $\varphi_{(\mathcal{{U}},v)}(v^{-1})=v^{-1}$;
\item for $u\neq v,$

\begin{itemize}
\item $\varphi_{(\mathcal{{U}},v)}(u)=u$ if $S_{u},S_{u^{-1}}\notin\mathcal{{U}}$;
\item $\varphi_{(\mathcal{{U}},v)}(u)=vuv^{-1}$ if $S_{u},S_{u^{-1}}\in\mathcal{{U}}$;
\item $\varphi_{(\mathcal{{U}},v)}(u)=vu$ and $\varphi_{(\mathcal{{U}},v)}(u^{-1})=uv^{-1}$
if $S_{u}\in\mathcal{{U}}$, $S_{u^{-1}}\notin\mathcal{{U}}$;
\end{itemize}
\end{itemize}
By forgetting the order in which the spheres are intersected and looking
only at the arcs connecting boundary components in $\widehat{M}_{k}$
one gets a finite graph with $2k$ vertices labeled $X^{\pm1}=\left\{ x_{1}^{\pm1},\cdots,x_{k}^{\pm1}\right\} $.
This is called the \textbf{Whitehead graph} of the cyclic word $[w]$,
denoted by\marginpar{$\Gamma\left(w\right)$} $\Gamma(w)$. For example,
$\Gamma\left(w\right)$ for $w=x_{1}x_{2}^{\,2}x_{3}^{-1}x_{2}^{-2}\in F_{3}$
is:

\[
\xymatrix{x_{1}\ar@{-}[dr] & x_{1}^{-1}\\
x_{2}\ar@{-}[d]\ar@{-}@/^{0.4pc}/[r] & x_{2}^{-1}\ar@{-}@/^{0.4pc}/[l]\ar@{-}[u]\\
x_{3} & x_{3}^{-1}\ar@{-}[ul]
}
\]
Going from manifolds to graphs, to every $\left(\mathcal{{U}},v\right)$
defined as above corresponds a partition of the vertices by $Z=\left\{ x_{u}\,|\, S_{u}\in\mathcal{{U}}\right\} $
and $Y=X^{\pm1}\backslash(Z\cup\left\{ v\right\} )$. Denote $\phi_{Y,Z,v}=\varphi_{(\mathcal{{U}},v)}$\marginpar{$\phi_{Y,Z,v}$},
and notice that $X^{\pm1}=Y\amalg Z\amalg\left\{ v\right\} $. The
following theorem, part of the foundation for Whitehead's algorithm,
plays a central role in our argument:
\begin{thm}
\label{thm:in-free-factor-then-cut-vertex}\cite[Thm 2.4]{Sta99}
If $w$ is contained in a proper free factor of $F_{k}$, then $\Gamma\left(w\right)$
has a cut vertex.
\end{thm}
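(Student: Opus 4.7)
My plan is to prove Theorem~\ref{thm:in-free-factor-then-cut-vertex} via the 3-manifold picture developed above. The strategy is: a proper free factor decomposition of $F_k \cong \pi_1(M_k)$ produces an essential embedded 2-sphere $T\subset M_k$ that can be taken disjoint from the simple curve $\gamma$ representing $[w]$; after surgery, $T$ is replaced by a sphere of Whitehead form $S_{\mathcal{U}}$ still disjoint from $\gamma$; and the separating behavior of $S_{\mathcal{U}}$ directly exhibits a cut vertex of $\Gamma(w)$.

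First, I would realize $[w]$ by an oriented simple closed curve $\gamma$ in $M_k$ meeting $\bigsqcup_{i} S_i$ transversely, and use the hypothesis that $w$ lies in a proper free factor $F \lneq F_k$ to produce an essential embedded 2-sphere $T\subset M_k$ disjoint from $\gamma$: a splitting $F_k = F * F'$ with $w\in F$ and $F'\neq\{1\}$ can be realized as a connected-sum decomposition of $M_k$ with $T$ the connecting sphere, and $\gamma$ pushed entirely into the $F$-side. Next, I would isotope $T$, keeping $T\cap\gamma=\emptyset$, to minimize the number of circles in $T\cap\bigsqcup_i S_i$. If this minimum is positive, pick an innermost disk $D\subset T$ with $\partial D\subset S_j$; the curve $\partial D$ also bounds a disk $D'\subset S_j$, and replacing a neighborhood of $D\cup D'$ in $T$ by two pushed-off copies of $D'$ surgers $T$ into two spheres $T_1,T_2$, both still disjoint from $\gamma$ and each with strictly fewer intersections with $\bigsqcup_i S_i$. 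At least one of the $T_i$ is essential in $M_k$, contradicting minimality. Hence the minimum is zero, so up to isotopy $T$ lies in $\widehat{M}_k\cup S_j$ for some $j$, and is isotopic to a Whitehead sphere $S_{\mathcal{U}}$ with designated letter $v = x_j^{\pm 1}$.

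With such an $S_{\mathcal{U}}$ in hand, translation to the Whitehead graph is direct. Each arc of $\gamma\cap\widehat{M}_k$ is an edge of $\Gamma(w)$ whose endpoints are the labels in $X^{\pm 1}$ of the boundary components it meets. The sphere $S_{\mathcal{U}}$ separates $\widehat{M}_k\cup S_j$ into two components, carrying the boundary spheres indexed by $\mathcal{U}$ (including $v^{-1}$) on one side and those indexed by $X^{\pm 1}\setminus(\mathcal{U}\cup\{v\})$ on the other. Since $\gamma$ misses $S_{\mathcal{U}}$, any arc of $\gamma\cap\widehat{M}_k$ whose endpoints avoid $S_j$ lies on a single side; the arcs that end on $S_j$ are precisely the edges of $\Gamma(w)$ incident to $v$ or $v^{-1}$. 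Deleting the vertex $v$ therefore removes every edge that could have crossed between the $\mathcal{U}$-side and its complement, so $v$ is a cut vertex of $\Gamma(w)$.

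The main obstacle is the sphere-surgery step: one must verify carefully that innermost-disk surgery preserves both essentiality in $M_k$ and disjointness from $\gamma$, and that the minimization process actually terminates at a sphere in Whitehead form rather than something more complicated. This is the technical heart of Stallings' argument and fits neatly into Hatcher's later theory of sphere systems in $M_k$; once this step is in place, everything else is a dictionary between the topology of $M_k$ and the combinatorics of $\Gamma(w)$.
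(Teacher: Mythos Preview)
The paper does not give its own proof of this theorem: it is quoted verbatim from \cite[Thm~2.4]{Sta99} and used as a black box. Your proposal is essentially a sketch of Stallings' original argument via spheres in $M_k$, so there is nothing to compare against in the present paper.

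As a sketch of Stallings' proof your outline is sound, and you correctly flag the innermost-disk surgery as the step needing care (keeping the new sphere essential and disjoint from $\gamma$). One point in the final translation is slightly garbled and can in fact be simplified. Once you have an essential sphere $T\subset \widehat{M}_k$ disjoint from $\gamma$, the sphere $T$ separates the punctured $3$-sphere $\widehat{M}_k$ into two pieces whose boundary components are indexed by $\mathcal{U}$ and $X^{\pm1}\setminus\mathcal{U}$, both nonempty since $T$ is essential. Every arc of $\gamma\cap\widehat{M}_k$ lies on one side, so \emph{no} edge of $\Gamma(w)$ joins $\mathcal{U}$ to its complement; thus $\Gamma(w)$ is already disconnected, which in the paper's convention counts as having a cut vertex. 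There is no need to single out a letter $v$ or to pass to $\widehat{M}_k\cup S_j$; indeed, if $S_v\in\mathcal{U}$ and $S_{v^{-1}}\notin\mathcal{U}$, then after regluing $S_j$ the sphere $S_{\mathcal{U}}$ becomes \emph{non}-separating in $\widehat{M}_k\cup S_j$, contrary to what you wrote. Dropping that paragraph and concluding with the disconnection of $\Gamma(w)$ gives a cleaner and correct finish.
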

Namely, there exists a vertex $v$ such that $\Gamma(w)\backslash\left\{ v\right\} $
is disconnected. This includes the case where $\Gamma\left(w\right)$
is itself disconnected. Note that, in particular, all primitive elements
are contained in a rank one free factor, hence have Whitehead graphs
with cut vertices.

Note that the cyclic length $\left|w\right|_{c}$ of $w$ is the number
of edges in the Whitehead graph corresponding to a cyclically reduced
representative. A natural candidate for a length reducing Whitehead
automorphism is therefore to replace the sphere corresponding to the
cut vertex $v$ by one that separates a connected component of $\Gamma(w)\backslash\left\{ v\right\} $.
Indeed, we have the following:
\begin{prop}
\label{prop:white-uto-reduces}\cite[Prop 2.3]{Sta99} Let $v$ be
a cut-vertex of $\Gamma\left(w\right)$, and let $Y$ and $Z$ be
a non-trivial partition of the remaining vertices so that there are
no edges between $Y$ and $Z$, and $v^{-1}\in Z$. Then
\[
\left|\phi_{Y,Z,v}\left(w\right)\right|_{c}=\left|w\right|_{c}-E\left(Y,v\right).
\]

\end{prop}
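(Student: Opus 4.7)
The plan is to compute $|\phi(w)|_c$ directly by applying $\phi_{Y,Z,v}$ letter-by-letter to $w = a_1 a_2 \cdots a_N$ (cyclically reduced) using the formulas stated above, and tracking how many inserted $v^{\pm 1}$-letters survive the subsequent free and cyclic reduction. Each $\phi(a_i)$ is either $a_i$ itself (when $a_i \in \{v, v^{-1}\}$ or when $a_i, a_i^{-1} \in Y$) or $a_i$ with an added $v^{\pm 1}$ on one or both sides (when $a_i \in Z$ or $a_i^{-1} \in Z$). Consequently, before reduction, $\phi(w)$ is the original $w$ with $v^{\pm 1}$-insertions placed at specified positions.

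Next, I would examine each interface between $\phi(a_i)$ and $\phi(a_{i+1})$ and tabulate how much it contributes to $|\phi(w)|_c - |w|_c$. A short case analysis based on whether each of the two adjacent $v^{\pm 1}$-insertions occurred, together with the special possibilities $a_i \in \{v, v^{-1}\}$ and $a_{i+1} \in \{v, v^{-1}\}$, shows that each pair contributes $0$, $+1$, or $-1$. Critically, one must verify that no longer cancellations cascade after an interface cancellation consumes an original $v$ or $v^{-1}$; here the hypothesis that there are no $Y$-$Z$ edges is used, as any prospective cascade would force an exposed letter to lie in $Y$ while an adjacent one lies in $Z$, contradicting the hypothesis.

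Reorganizing the per-pair contributions by the corresponding edge $\{a_i, a_{i+1}^{-1}\}$ in the Whitehead graph yields: edges within $Y$, edges within $Z \setminus \{v^{-1}\}$, edges from $v$ to $Y$, and the possible edge $\{v, v^{-1}\}$ all contribute $0$; each edge from $v$ to $Z \setminus \{v^{-1}\}$ contributes $+1$; each edge from $v^{-1}$ to $Z \setminus \{v^{-1}\}$ contributes $-1$; and $Y$-$Z$ edges do not occur. Summing, $|\phi(w)|_c - |w|_c = E(v, Z \setminus \{v^{-1}\}) - E(v^{-1}, Z \setminus \{v^{-1}\})$.

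Finally, I would close the argument via the identity $\deg_{\Gamma(w)}(v) = \deg_{\Gamma(w)}(v^{-1})$ (each occurrence of $v$ or $v^{-1}$ in $w$ contributes precisely one edge at $v$ and one at $v^{-1}$) together with $E(v^{-1}, Y) = 0$, which is forced by the hypothesis since $v^{-1} \in Z$. Expanding both degrees as sums of incidences with $Y$, with $Z \setminus \{v^{-1}\}$, and with $\{v^{\mp 1}\}$, and subtracting, gives $E(v, Z \setminus \{v^{-1}\}) - E(v^{-1}, Z \setminus \{v^{-1}\}) = -E(v, Y) = -E(Y, v)$, as required. The main obstacle I anticipate is the interface case analysis with the exceptional handling of original $v$- and $v^{-1}$-letters and the verification that no cancellations cascade; the closing symmetry argument is then short.
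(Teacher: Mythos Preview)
The paper does not give its own proof of this proposition; it is quoted verbatim from Stallings \cite[Prop.~2.3]{Sta99} and only illustrated afterwards by the worked example $w=x_1x_2^{\,2}x_3^{-1}x_2^{-2}$. So there is nothing in the paper to compare against beyond the citation. Your strategy---expand $\phi(w)$ letter by letter, localize the length change to the cyclic interfaces, rule out cascading cancellations using $E(Y,Z)=0$, and finish with the identity $\deg(v)=\deg(v^{-1})$ together with $E(v^{-1},Y)=0$---is exactly the standard combinatorial proof of this lemma, and is what one finds in Stallings' paper.

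One bookkeeping caution. With the paper's Whitehead-graph convention the edge at interface $i$ is $\{a_i,a_{i+1}^{-1}\}$, whereas the insertion rules you use (prepend $v$ iff $a_{i+1}\in Z\setminus\{v^{-1}\}$; append $v^{-1}$ iff $a_i^{-1}\in Z\setminus\{v^{-1}\}$) are conditions on the \emph{inverses} $a_i^{-1}$ and $a_{i+1}$ of those endpoints. So the claim that the interface contribution depends only on where the Whitehead edge $\{a_i,a_{i+1}^{-1}\}$ sits among $Y$, $Z\setminus\{v^{-1}\}$, $\{v^{-1}\}$, $\{v\}$ is not literally correct as written; for example, for a $v$--$Y$ edge the contribution can be $+1$ rather than $0$. (Relatedly, the paper's displayed formula for $\varphi_{(\mathcal U,v)}$ and Proposition~\ref{prop:white-uto-reduces} appear to differ by an inverse: with the formula as printed, $w=x_1x_2\in F_2$, $v=x_1$, $Z=\{x_1^{-1},x_2\}$, $Y=\{x_2^{-1}\}$ gives $\phi(w)=x_1^{\,2}x_2$ of cyclic length $3$, not $2-E(Y,v)=1$; the paper's own example happens to be insensitive to this because there $Z\setminus\{v^{-1}\}$ is closed under inversion.) Once this direction/convention issue is straightened out, your tabulation and the closing degree argument go through unchanged; there is no genuine gap in the method.
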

Here $E\left(Y,v\right)$ is the number of edges connecting $v$ to
$Y$. For instance, for $w=x_{1}x_{2}^{\,2}x_{3}^{-1}x_{2}^{-2}$
as above, there are two possible cut-vertices: $x_{2}$ and $x_{2}^{-1}$.
If one chooses $v=x_{2}$ and $Z=\left\{ x_{1},x_{1}^{-1},x_{2}^{-1}\right\} $,
then $[\phi_{Y,Z,v}\left(w\right)]=[x_{2}x_{1}x_{2}x_{3}^{-1}x_{2}^{-2}]=[x_{1}x_{2}x_{3}^{-1}x_{2}^{-1}]$
has cyclic length $4$.

Moreover, it is easy to see that if $w$ is contained in a proper
free factor then it is almost always possible to find a triplet $\left(Y,Z,v\right)$
as in Proposition \ref{prop:white-uto-reduces} with $E\left(Y,v\right)>0$:
the only exceptions are $\left|w\right|_{c}\leq1$ or when $w$ is
a word in a proper subset of the letters, say $x_{1},\ldots,x_{j}$
($j<k$), and it does not belong to a proper free factor in $F\left(x_{1},\ldots,x_{j}\right)$.
This is the crux of the Whitehead algorithm to detect primitives:
since the second case cannot occur for primitive elements with $\left|w\right|_{c}>1$,
if $w$ is primitive one can always apply a sequence of Whitehead
automorphisms according to cut vertices in the Whitehead graph, until
it becomes a (conjugate of a) single-letter word. 

Our proof of Theorem \ref{thm:growth-of-prims} (and of Corollary
\ref{cor:growth-of-proper-free-factors}) relies on a rigorous analysis
of the possible triplets $\left(Y,Z,v\right)$. We say that a triplet
$\left(Y,Z,v\right)$ is \emph{valid} for the cyclic word $\left[w\right]$
if it satisfies the statement in Proposition \ref{prop:white-uto-reduces}
(namely, if $v$ is a cut-vertex of $\Gamma\left(w\right)$, $Y$
and $Z$ are a non-trivial partition of the remaining vertices with
$E\left(Y,Z\right)=0$, and $v^{-1}\in Z$). Let $A_{Y,Z,v}$ denote
the set of all cyclic words having $(Y,Z,v)$ as a valid triplet;
namely\marginpar{$A_{Y,Z,v}$}
\[
A_{Y,Z,v}=\left\{ \left[w\right]\,|\,\left(Y,Z,v\right)\textrm{ is a valid triplet\,\ for }w\right\} 
\]
and\marginpar{$A_{Y,Z,v}^{N}$}

\[
A_{Y,Z,v}^{N}=\left\{ \left[w\right]\in A_{Y,Z,v}\,\middle|\,\left|w\right|_{c}=N\right\} .
\]
By Theorem \ref{thm:in-free-factor-then-cut-vertex},
\[
C_{k,N}\subseteq\bigcup_{\left(Y,Z,v\right)}A_{Y,Z,v}^{N}
\]
taking the union over all possible triplets partitioning $X^{\pm1}$
(with $Y,Z\ne\emptyset$ and $v^{-1}\in Z$). 

We proceed by bounding the growth of primitives in $A_{Y,Z,v}$ for
each of the finitely many triplets $(Y,Z,v)$. Intuitively, the cut
vertex and partition will restrict the number of possible ways to
connect vertices, hence result in a smaller growth rate. In the extreme
case, if both sets $Y$ and $Z$ contain roughly half of the elements
of $X^{\pm1}$, namely $|Y|\approx k$ and $|Z|\approx k$; then from
any vertex in $Y$ one can connect only to another vertex in $Y\cup\left\{ v\right\} $,
resulting in $\sim k$ choices. If we ignore the possibility of going
through $v$, then the possible number of such cyclic words would
be roughly only $k^{N}$, which amounts to an exponential growth rate
of $k$ (note that this applies to the whole set $A_{Y,Z,v}$ and
not only to the primitives in it). Hence we should expect $A_{Y,Z,v}$
to grow faster for triplets $(Y,Z,v)$ where one of $Y,Z$ is almost
all of $X^{\pm1}$. 

Indeed it turns out that $A_{Y,Z,v}$ is negligible unless one of
$Y,Z$ is very small:
\begin{prop}
\label{prop:Every-triplet-except...-is-negligible}Every triplet $\left(Y,Z,v\right)$
satisfies
\[
\limsup_{N\to\infty}\sqrt[N]{\left|A_{Y,Z,v}^{N}\right|}<2k-3,
\]
unless $min(|Y|,|Z|)=1$ or $Y=\left\{ x,x^{-1}\right\} $ for some
letter $x$.
\end{prop}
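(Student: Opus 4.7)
The plan is to realize each cyclic word in $A_{Y,Z,v}^N$ as a closed reduced walk of length $N$ in a finite directed graph $G=G_{Y,Z,v}$. The vertex set of $G$ is $X^{\pm1}$, with an edge $a\to b$ whenever $b\neq a^{-1}$ and the Whitehead edge $\{a^{-1},b\}$ does not straddle the partition $\{Y,Z\}$ (edges incident to $v$ are always allowed). Cyclic words of length $N$ correspond, up to the cyclic action, to closed walks of length $N$ in $G$, so $\limsup_{N\to\infty}\sqrt[N]{|A_{Y,Z,v}^N|}=\lambda(T)$, the Perron--Frobenius eigenvalue of the adjacency matrix $T$ of $G$. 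The goal becomes showing $\lambda(T)<2k-3$ outside the exceptional triplets.

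I would apply the Collatz--Wielandt characterization: once one checks that $G$ is irreducible (routine, using that $v^{-1}$ has out-degree $2k-1$ and $v$ has in-degree $2k-1$), it suffices to exhibit a positive weight vector $x$ with $Tx\le(2k-3)\,x$ coordinatewise and strict inequality in at least one coordinate. Using the partition symmetry, parametrize $x$ by four weights: $p_Y$ for each vertex in $Y$, $p_Z$ for each vertex in $Z\setminus\{v^{-1}\}$, and $A,B$ for the special vertices $v$ and $v^{-1}$. Writing out each out-neighborhood, the inequality $Tx\le(2k-3)\,x$ reduces to at most six linear inequalities in $(p_Y,p_Z,A,B)$, one per ``type'' of vertex according to the sides of $a$ and of $a^{-1}$:
\begin{align*}
(|Y|-1)p_Y+A&\le(2k-3)p_Y, & (|Z|-2)p_Z+A+B&\le(2k-3)p_Y,\\
(|Y|-1)p_Y+A&\le(2k-3)p_Z, & (|Z|-2)p_Z+A+B&\le(2k-3)p_Z,\\
(|Z|-1)p_Z+A&\le(2k-3)A, & |Y|p_Y+(|Z|-1)p_Z+B&\le(2k-3)B,
\end{align*}
where the first four are required only when the corresponding letter type actually occurs in the partition.

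The main step is then a case analysis on the triple $(n_{YY},n_{YZ},n_{ZZ})$ counting the non-$v$ generators whose two letters lie entirely in $Y$, split between $Y$ and $Z$, or lie entirely in $Z\setminus\{v^{-1}\}$. The exceptional triplets singled out in the statement are precisely those in which the system above becomes infeasible at $\alpha=2k-3$: for instance, when $Y=\{x,x^{-1}\}$ both diagonal types $(Y,Y)$ and $(Z,Z)$ occur, and combining them with the $v$- and $v^{-1}$-constraints yields incompatible lower and upper bounds on $A$, forcing $p_Y\le 0$. In each remaining configuration a positive solution with strict inequality in at least one coordinate can be exhibited---either by choosing $p_Y/p_Z$ via the mixed inequalities when $n_{YZ}>0$, or by exploiting the slack created when $n_{YZ}=0$ forces $|Y|\ge 4$ (resp.\ $|Z|\ge 3$) in the non-exceptional case. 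The main obstacle is not any single argument but the patience-intensive bookkeeping to verify feasibility (with strictness) across all sub-configurations of $(n_{YY},n_{YZ},n_{ZZ})$; this is the ``somewhat technical'' content the paper defers to Section~\ref{sec:Growth-of-partitions}.
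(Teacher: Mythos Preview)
Your spectral approach via Perron--Frobenius and Collatz--Wielandt is sound in principle and is a genuine alternative to the paper's method. The paper proves the proposition by a direct counting argument (its Lemma~\ref{lem:technical}): for each letter $x$ it finds an $r=r(x)$ so that the number of length-$r$ continuations from $x$ is strictly below $(2k-3)^{r}$, and then runs a case analysis on $|Y|,|Z|$, sometimes taking $r$ as large as $9$ or $16$ for small $k$. Your approach replaces this with a search for a positive weight vector satisfying $Tx\le(2k-3)x$ with strict inequality somewhere. The two are closely related---the paper's variable-$r$ bound is essentially an implicit construction of such a weight---but yours is cleaner conceptually, while the paper's is more purely computational and avoids solving a linear system.

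However, your four-parameter ansatz (a single weight $p_{Y}$ for all of $Y$ and a single $p_{Z}$ for all of $Z\setminus\{v^{-1}\}$) is too coarse, and your claim that a positive solution with strictness always exists in the non-exceptional cases is false as stated. Take $k=4$, $Y=\{a,b\}$ with $a^{-1},b^{-1}\in Z$, so $Z=\{a^{-1},b^{-1},c,c^{-1},v^{-1}\}$. Here types $2,3,4,5,6$ are all present, and your system becomes
\[
3p_{Z}+A+B\le 5p_{Y},\quad p_{Y}+A\le 5p_{Z},\quad A+B\le 2p_{Z},\quad p_{Z}\le A,\quad p_{Y}+2p_{Z}\le 2B.
\]
From $p_{Z}\le A$ and $A+B\le 2p_{Z}$ one gets $B\le p_{Z}$; combined with $p_{Y}+2p_{Z}\le 2B\le 2p_{Z}$ this forces $p_{Y}\le 0$, so there is no positive solution at all. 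The fix is straightforward but necessary: refine the weight vector to distinguish vertices in $Z\setminus\{v^{-1}\}$ whose inverse lies in $Y$ from those whose inverse lies in $Z$ (and likewise within $Y$). With separate weights for each of the up to six letter-types, the system does become feasible (e.g.\ in the example above, give $a^{-1},b^{-1}$ small weight and $c,c^{-1}$ large weight), but the case analysis is then on six parameters rather than four.
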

The proof of this proposition involves some careful analysis in various
cases, and we postpone it to Section \ref{sec:Growth-of-partitions}.
In Section \ref{sec:exp-growth-rate} we assume this proposition,
give the precise growth rates for the remaining essential partitions
and obtain our theorems.

\section{Proof of Theorems\label{sec:exp-growth-rate}}

\noindent In this section we complete the proofs of Theorems \ref{thm:growth-of-prims}
and \ref{thm:growth-of-cyclic-prims} and of Corollary \ref{cor:growth-of-proper-free-factors}.

\subsection{Primitives and cyclic primitives\label{sub:The-growth-of-primitives}}

Here, we present the observation that, unlike in $F_{2}$ , for $k\geq3$
it suffices to count conjugacy classes containing primitive words. 
\begin{prop}
\label{prop:prim-to-conj} For $k\geq3$, if $\left|C_{k,N}\right|\leq C\cdot(2k-3)^{N}$
for some $C>0$ as follows from Theorem \ref{thm:growth-of-cyclic-prims},
then 
\[
|P_{k,N}|\leq D\cdot N\cdot(2k-3)^{N}
\]
for some $D>0$.\end{prop}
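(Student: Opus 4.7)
The plan is to reduce counting primitive elements in $P_{k,N}$ to counting primitive conjugacy classes in $C_{k,M}$ for $M\le N$, by using the standard conjugacy normal form of reduced words in a free group. The starting observation is that every reduced $w\in F_k$ admits a unique decomposition $w=u\,\tilde w\,u^{-1}$, where $\tilde w$ is cyclically reduced and the product on the right involves no cancellation, so $|w|=|\tilde w|_c+2|u|$. (This is obtained by iteratively stripping off matched outer pairs $a,a^{-1}$ until $w$ becomes cyclically reduced.) Since primitivity is conjugation-invariant, $w$ is primitive iff $\tilde w$ is.

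The next step is to bound the number of possible $\tilde w$ and of possible $u$ separately. Each primitive conjugacy class $[\tilde w]\in C_{k,M}$ has at most $M$ distinct cyclic rotations, each of which is a cyclically reduced primitive word of length $M$, so the number of cyclically reduced primitive words of length $M$ is at most $M\,|C_{k,M}|\le CM(2k-3)^M$. For a fixed cyclically reduced $\tilde w$ of length $M\ge 1$, the number of reduced $u$ of length $\ell$ with $u\tilde w u^{-1}$ reduced of length $M+2\ell$ is at most the total number of reduced words of length $\ell$, which is $2k(2k-1)^{\ell-1}$ for $\ell\ge 1$ (and $1$ for $\ell=0$).

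Summing over $M=N-2\ell$ and factoring out $(2k-3)^N$ yields
\[
|P_{k,N}|\;\le\;\sum_{\ell\ge 0}\,C\,(N-2\ell)\,(2k-3)^{N-2\ell}\cdot 2k(2k-1)^{\ell-1}\;\le\;D'\cdot N\,(2k-3)^N\sum_{\ell\ge 0}\!\left(\frac{2k-1}{(2k-3)^2}\right)^{\!\ell}.
\]
The pivotal arithmetic input is the inequality $(2k-3)^2>2k-1$ for $k\ge 3$, which simplifies to $2(2k-5)(k-1)>0$. Under this inequality the geometric series converges to a constant depending only on $k$, delivering the desired bound $|P_{k,N}|\le D\cdot N\,(2k-3)^N$.

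The proof is essentially bookkeeping and presents no deep obstacle; the only real content is the decomposition and the convergence condition. It is worth highlighting that the inequality $(2k-3)^2>2k-1$ fails precisely for $k=2$ (giving $1<3$), in which case the geometric sum diverges and the factor $(2k-1)^{\ell}$ dominates --- exactly reflecting the well-known phenomenon, mentioned earlier in the introduction, that in $F_2$ most long primitives are conjugates of short ones, whereas for $k\ge 3$ primitive elements are generically nearly cyclically reduced.
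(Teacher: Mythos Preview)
Your proof is correct and follows essentially the same approach as the paper: decompose each primitive as $u\,\tilde w\,u^{-1}$ with $\tilde w$ cyclically reduced, bound the number of cyclically reduced primitives of length $M$ by $M\,|C_{k,M}|$, bound the number of choices for $u$ by the count of reduced words of length $\ell$, and sum the resulting geometric series using $(2k-3)^2>2k-1$ for $k\ge 3$. The only inessential differences are constants (the paper uses the slightly sharper $(2k-2)(2k-1)^{\ell-1}$ for $u$ and notes that primitives, being non-powers, have \emph{exactly} $M$ cyclic rotations), and your concluding remark on why the argument breaks for $k=2$ is a welcome addition.
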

\begin{proof}
Each $w\in P_{k,N}$ is of the form 
\[
uw'u^{-1}
\]
where $w'\in F_{k}$ is cyclically reduced and primitive. Let $\ell$
be the word length of $u$, so that $0\leq\ell\leq\frac{N-1}{2}$
and $\left|w\right|_{c}=\left|w'\right|_{c}=N-2\ell$.

Since $w'$ is primitive, in particular, it is not a proper power,
hence each of its cyclic shifts is different. Namely, the cyclic word
$[w']$ is represented by exactly $N-2\ell$ distinct cyclically reduced
words. On the other hand, $u$ can be any word of length $\ell$ as
long as the first letter of $u^{-1}$ and the last letter of $u$
do not cancel out their adjacent letters in $w'$. There are $\left(2k-1\right)^{\ell-1}\left(2k-2\right)$
such words. Therefore, 
\begin{eqnarray*}
\left|P_{k,N}\right| & = & \sum_{\ell=0}^{\left\lfloor \frac{N-1}{2}\right\rfloor }\left(N-2\ell\right)\left|C_{k,N-2\ell}\right|\left(2k-2\right)\left(2k-1\right)^{\ell-1}\\
 & \leq & N\cdot\sum_{\ell=0}^{\left\lfloor \frac{N-1}{2}\right\rfloor }\left|C_{k,N-2\ell}\right|\left(2k-1\right)^{\ell}\\
 & \leq & N\cdot\sum_{\ell=0}^{\left\lfloor \frac{N-1}{2}\right\rfloor }C\cdot\left(2k-3\right)^{N-2\ell}\left(2k-1\right)^{\ell}\\
 & = & N\cdot C\cdot\left(2k-3\right)^{N}\sum_{\ell=0}^{\left\lfloor \frac{N-1}{2}\right\rfloor }\left(\frac{2k-1}{\left(2k-3\right)^{2}}\right)^{\ell}.
\end{eqnarray*}
For $k\geq3$, $\left(\frac{2k-1}{\left(2k-3\right)^{2}}\right)\leq\frac{{5}}{9}<1$.
Bounding the geometric series, we deduce that $\left|P_{k,N}\right|\leq D\cdot N\cdot\left(2k-3\right)^{N}$.
\end{proof}
The proposition shows that Theorem \ref{thm:growth-of-prims} follows
from Theorem \ref{thm:growth-of-cyclic-prims}: For the lower bound
in Theorem \ref{thm:growth-of-prims} recall that the number of cyclically
reduced primitive words of length $N$ with one of the letters appearing
exactly once is 
\[
N\cdot\left|L_{k,N}\right|=\left(1-o_{N}\left(1\right)\right)\cdot N\cdot2k\cdot\left(2k-2\right)\cdot\left(2k-3\right)^{N-2}.
\]
To complete the proofs of Theorems \ref{thm:growth-of-prims} and
\ref{thm:growth-of-cyclic-prims}, it remains to bound from above
the growth of cyclic primitives. Before starting the proof we present
in Section \ref{sub:Ingredients-for-the-growth-of-cyclic-primitives}
a couple of useful facts which will be used in the sequel.

\subsection{Ingredients for bounding cyclic primitives\label{sub:Ingredients-for-the-growth-of-cyclic-primitives}}

First, we give some background on a line of thought regarding primitive
words which is different from Whitehead's and leads to a measure-theoretic
characterization of primitives. Let $w=x_{i_{1}}^{\,\varepsilon_{1}}x_{i_{2}}^{\,\varepsilon_{2}}\cdots x_{i_{N}}^{\,\varepsilon_{N}}$
be a word in $F_{k}$. For every group $G$, $w$ induces a \emph{word
map }from the Cartesian product $G^{k}$ to $G$, by substitutions:
\[
w:\left(g_{1},\ldots,g_{k}\right)\mapsto g_{i_{1}}^{\,\varepsilon_{1}}g_{i_{2}}^{\,\varepsilon_{2}}\cdots g_{i_{N}}^{\,\varepsilon_{N}}.
\]
When $G$ is finite (compact) and $G^{k}$ is given the uniform (Haar,
resp.) measure, the push forward by $w$ of this measure results in
a new measure on $G$, which we denote by $G_{w}$\marginpar{$G_{w}$}.
It is an easy observation that if $w_{1}$ and $w_{2}$ are in the
same $\mathrm{Aut}\, F_{k}$-orbit of $F_{k}$, then they induce the
same measure on every finite or compact group, namely $G_{w_{1}}=G_{w_{2}}$
(see \cite[Observation 1.2]{PP15}). In particular, if $w$ is primitive,
then $G_{w}=G_{x_{1}}$ which is clearly the uniform (Haar) measure
on $G$. 

It is natural to ask whether the converse also holds. Namely, if $G_{w_{1}}=G_{w_{2}}$
for every finite (compact) group, does it imply that $w_{1}$ and
$w_{2}$ are in the same $\mathrm{Aut}\, F_{k}$-orbit? This conjecture
is still wide open. However, the special case concerning primitives
was settled in \cite{PP15}. It is shown there that if $G_{w}$ is
uniform for every finite group $G$, then $w$ is primitive. In the
heart of the argument in \cite{PP15} lies a result about the distributions
induced by words on the symmetric groups $S_{n}$. We re-formulate
it as follows:
\begin{thm}
\cite[Thm 1.7]{PP15} Let $w\in F_{k}$. For every $n\in\mathbb{N}$
let $\sigma_{w,n}$ be a random permutation in $S_{n}$ distributed
according to $\left(S_{n}\right)_{w}$. Then $w$ is non-primitive
if and only if there exists some $n_{0}$ such that for all $n>n_{0}$
we have

\[
\mathbb{{E}}(|\mathrm{Fix}\,(\sigma_{w,n})|)>1,
\]
where $\mathrm{Fix}\mbox{\,(\ensuremath{\sigma})}$ denotes the set
of fixed points of $\sigma$.
\end{thm}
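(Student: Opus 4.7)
The ``only if'' direction is automorphism invariance. For primitive $w$, pick $\phi \in \mathrm{Aut}(F_{k})$ with $\phi(x_{1})=w$; the bijection $(g_{1},\ldots,g_{k}) \mapsto \phi(g_{1},\ldots,g_{k})$ of $G^{k}$ preserves the uniform measure and intertwines the $w$-word map with the $x_{1}$-map, so $(S_{n})_{w}$ is the uniform measure on $S_{n}$. The standard identity $\mathbb{E}|\mathrm{Fix}(\sigma)|=1$ for uniform $\sigma \in S_{n}$ (sum of $n$ indicators, each of probability $1/n$) then gives $\mathbb{E}|\mathrm{Fix}(\sigma_{w,n})|=1$ for every $n$, so the strict inequality fails for all $n$.

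For the hard direction my plan is to derive an asymptotic expansion of $\mathbb{E}|\mathrm{Fix}(\sigma_{w,n})|$ in powers of $1/n$ and read off positivity of the leading correction from group-theoretic data. The combinatorial setup identifies a homomorphism $\rho : F_{k} \to S_{n}$ with an $n$-sheeted covering $p : \widetilde{R} \to R_{k}$ of the $k$-petal rose, and identifies a fixed point $i$ of $\rho(w)$ with a sheet $\widetilde{*} \in p^{-1}(*)$ over which the lift of the loop $w$ closes. Stratify the pairs $(\rho,\widetilde{*})$ by the smallest subgraph $\Gamma \subseteq \widetilde{R}$ containing this closed lift; after folding, $\Gamma$ becomes a pointed Stallings core graph immersed in $R_{k}$ through which $w$ factors as a closed loop. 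For fixed $\Gamma$, the number of $n$-sheeted covers extending it, divided by $(n!)^{k}$, contributes on the order of $n^{1-\mathrm{rk}(\Gamma)}$, where $\mathrm{rk}(\Gamma)$ is the first Betti number. Collecting terms yields
\[
\mathbb{E}|\mathrm{Fix}(\sigma_{w,n})| \;=\; 1 \;+\; \sum_{r \geq 1} \frac{a_{r}(w)}{n^{r-1}}\bigl(1+O(1/n)\bigr),
\]
where the leading $1$ is the contribution of the single-vertex, no-edge graph (covers in which every letter of $w$ stabilizes $\widetilde{*}$), and $a_{r}(w)$ counts, with appropriate inclusion--exclusion signs, finite pointed core graphs of rank $r$ representing $w$ as a closed loop.

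The key algebraic point is to identify the smallest $r$ with $a_{r}(w) \neq 0$ as the \emph{primitivity rank}
\[
\pi(w) \;=\; \min\bigl\{\mathrm{rk}(H) \,:\, w \in H \leq F_{k} \text{ and } w \text{ is non-primitive in } H\bigr\},
\]
and to show that the coefficient there equals $|\mathrm{Crit}(w)|$, the number of subgroups realizing this minimum. Indeed, whenever a core graph $\Gamma$ of rank $<\pi(w)$ carries $w$, the element $w$ is primitive in $\pi_{1}(\Gamma)$, and the immersions corresponding to such graphs aggregate to exactly the constant $1$ (this is the content of the ``only if'' direction applied inside each such subgroup). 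Since $\pi(w) \leq k < \infty$ iff $w$ is non-primitive in $F_{k}$, and since $\mathrm{Crit}(w)$ is nonempty and finite in that case, we obtain $\mathbb{E}|\mathrm{Fix}(\sigma_{w,n})|-1 \sim |\mathrm{Crit}(w)|\cdot n^{1-\pi(w)} > 0$ for all sufficiently large $n$.

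The main obstacle is precisely showing that the leading correction is strictly \emph{positive} rather than merely nonzero: inclusion--exclusion over the poset of Stallings graphs through which the $w$-loop factors could in principle produce cancellations. Overcoming this requires a careful analysis of how smaller-rank graphs embed into rank-$\pi(w)$ graphs and the verification that no rank-$\pi(w)$ graph representing $w$ as a primitive element can contribute with a sign that cancels a critical-subgroup contribution. Secondary technical issues are uniform control of the $O(1/n)$ error across the finitely many relevant graphs and handling the degenerate cases $w=1$ and $w$ a proper power (where $\pi(w)=0$ or $1$ and a direct cycle-index computation already gives $\mathbb{E}|\mathrm{Fix}|\to \infty$ or $\mathbb{E}|\mathrm{Fix}|=\tau(d)\geq 2$, respectively).
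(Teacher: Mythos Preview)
The paper does not prove this theorem; it is quoted verbatim as \cite[Thm~1.7]{PP15} and used as a black box to derive Proposition~3.3. There is therefore no ``paper's own proof'' to compare against. Your proposal is, in effect, a sketch of the argument in \cite{PP15} (and the related \cite{Pud14b}), and as such it is broadly accurate: the asymptotic expansion of $\mathbb{E}|\mathrm{Fix}(\sigma_{w,n})|$ via counting extensions of Stallings core graphs, the identification of the leading exponent with the primitivity rank $\pi(w)$, and the identification of the leading coefficient with $|\mathrm{Crit}(w)|$ are exactly the ingredients of that paper.

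Two small remarks. First, you have the direction labels reversed: in the biconditional ``$w$ non-primitive $\Leftrightarrow$ eventually $\mathbb{E}|\mathrm{Fix}|>1$'', the \emph{only if} direction is the hard one (non-primitive $\Rightarrow$ strict inequality), while the automorphism-invariance argument you give first is the contrapositive of the \emph{if} direction. Second, you correctly flag the genuine difficulty --- showing the leading coefficient is strictly positive rather than merely nonzero --- but your proposal does not resolve it; in \cite{PP15} this is handled by a M\"obius-type inversion on the poset of core graphs together with a structural lemma ensuring that rank-$\pi(w)$ graphs in which $w$ is primitive contribute zero after cancellation, leaving only the $|\mathrm{Crit}(w)|$ critical subgroups. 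Without that lemma the sketch remains a plan rather than a proof, which you acknowledge.
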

Note that for $w$ primitive, $\sigma_{w,n}$ is a uniformly distributed
random permutation in $S_{n}$, hence the expected number of fixed
points is exactly $1$. From this theorem we derive the following
fact which will be useful in the argument. We say that $w_{1}$ and
$w_{2}$ are \emph{letter disjoint} words if their reduced forms use
disjoint subsets of the alphabet $X$.
\begin{prop}
\label{prop: w1w2-prim-then-one-of-them-is-prim} Let $w_{1},w_{2}\in F_{k}$
be letter disjoint words. If the concatenation $w_{1}w_{2}$ is primitive,
then at least one of $w_{1}$ or $w_{2}$ is primitive.\end{prop}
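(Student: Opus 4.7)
The plan is to prove the contrapositive via the fixed-point characterization of primitivity just stated. Suppose neither $w_1$ nor $w_2$ is primitive, and set $a_i := \mathbb{E}\bigl(|\mathrm{Fix}(\sigma_{w_i, n})|\bigr)$ for $i = 1, 2$. The theorem gives $a_1, a_2 > 1$ for all sufficiently large $n$. It suffices to show that the same inequality holds for $\sigma_{w_1 w_2, n}$, which in turn forces $w_1 w_2$ to be non-primitive by the same theorem, contradicting the hypothesis.

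The crucial point is letter-disjointness: if $(\sigma_1, \ldots, \sigma_k) \in S_n^k$ is uniform, then $\sigma_{w_1, n}$ depends only on those $\sigma_i$ for which $x_i$ appears in $w_1$, and $\sigma_{w_2, n}$ only on the disjoint complementary coordinates, so the two induced random permutations are \emph{independent}. Moreover, since word measures are invariant under simultaneous conjugation of the generators, the measure $(S_n)_w$ on $S_n$ is conjugation-invariant. Combined with transitivity of $S_n$ on ordered pairs of distinct points, this yields the two-point formula: for any $y \in \{1,\ldots,n\}$,
\[
\Pr\bigl[\sigma_{w,n}(y) = y\bigr] = \frac{a}{n}, \qquad \Pr\bigl[\sigma_{w,n}(y) = x\bigr] = \frac{n-a}{n(n-1)} \ \text{for each } x \neq y,
\]
where $a = \mathbb{E}(|\mathrm{Fix}(\sigma_{w,n})|)$.

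Now condition on $y := \sigma_{w_2, n}(x)$ and use independence:
\[
\mathbb{E}\bigl(|\mathrm{Fix}(\sigma_{w_1 w_2, n})|\bigr) = \sum_{x=1}^{n} \sum_{y=1}^{n} \Pr[\sigma_{w_2, n}(x) = y]\,\Pr[\sigma_{w_1, n}(y) = x].
\]
Separating the diagonal $y = x$ from the off-diagonal $y \neq x$ and plugging in the two-point probabilities above, a short computation collapses this sum to $1 + \frac{(a_1 - 1)(a_2 - 1)}{n-1}$. For $n$ sufficiently large both $a_i - 1 > 0$, so this quantity exceeds $1$, and the theorem forces $w_1 w_2$ to be non-primitive, which is the contrapositive we wanted. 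The only mildly technical ingredient is the two-point formula for $\sigma_{w,n}$, which follows directly from the conjugation-invariance of word measures together with the double transitivity of $S_n$ on ordered distinct pairs; the rest is a symmetric algebraic expansion.
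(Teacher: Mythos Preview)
Your proof is correct and follows essentially the same route as the paper: both invoke the fixed-point criterion from \cite{PP15}, use letter-disjointness to get independence of $\sigma_{w_1,n}$ and $\sigma_{w_2,n}$, appeal to conjugation-invariance (equivalently double transitivity of $S_n$) for the off-diagonal probabilities, and then perform the same diagonal/off-diagonal split. The only cosmetic difference is that the paper works with $p_i=\Pr[\sigma_{w_i,n}(1)=1]=a_i/n$ and shows $p(w_1w_2,n)>\tfrac{1}{n}$, whereas you multiply through by $n$ and obtain the equivalent identity $\mathbb{E}|\mathrm{Fix}(\sigma_{w_1w_2,n})|=1+\tfrac{(a_1-1)(a_2-1)}{n-1}$.
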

\begin{proof}
Since the push-forward measure by $w$ is a class function, the probability
$Pr(\sigma_{w,n}(i)=i)$ is independent of $i$ (here $i\in\left\{ 1,\ldots,n\right\} $),
and likewise, $Pr(\sigma_{w,n}(i)=j)$ is independent of $i$ and
$j$ as long as $i\ne j$. Thus, $\mathbb{{E}}(|\mathrm{Fix}(\sigma_{w,n})|)>1$
if and only if $Pr(\sigma_{w,n}(1)=1)>\frac{1}{n}$. Let $p(w,n)=Pr(\sigma_{w,n}(1)=1)$. 

Since $w_{1}$ and $w_{2}$ are letter disjoint, they induce independent
push-forward measures on $S_{n}$. If both words are non-primitive
then for large enough $n$, both $p(w_{1},n)>\frac{1}{n}$ and $p(w_{2},n)>\frac{1}{n}$,
which implies
\begin{eqnarray*}
p\left(w_{1}w_{2},n\right) & = & Pr(\sigma_{w_{1}w_{2},n}(1)=1)\\
 & = & \sum_{j=1}^{n}Pr(\sigma_{w_{1},n}(1)=j)\cdot Pr(\sigma_{w_{2},n}(j)=1)\\
 & = & p(w_{1},n)p(w_{2},n)+\left(n-1\right)\frac{1-p(w_{1},n)}{n-1}\cdot\frac{1-p(w_{2},n)}{n-1}\\
 & = & \frac{1}{n}+\frac{n}{n-1}\cdot\left(p(w_{1},n)-\frac{1}{n}\right)\cdot\left(p(w_{2},n)-\frac{1}{n}\right)>\frac{1}{n}.
\end{eqnarray*}
This contradicts the assumption that $w_{1}w_{2}$ is primitive.
\end{proof}
Recall that by Proposition \prettyref{prop:Every-triplet-except...-is-negligible}
primitives from $A_{Y,Z,v}$ for most triplets $\left(Y,Z,v\right)$
are negligible. We make some simple observations about the remaining
three types of triplets:
\begin{itemize}
\item \noindent If $|Y|=1$, say $Y=\left\{ a\right\} $, and $w\in A_{Y,Z,v}$
then each appearance of $a$ is followed by $v^{-1}$ and each appearance
of $a^{-1}$ is preceded by $v$. It is not hard to see that the growth
rate here is at least $\sqrt{\left(2k-3\right)^{2}+1}>2k-3$. Indeed,
consider the $\left(2k-2\right)\left(2k-3\right)$ ordered reduced
pairs of letters not containing $a^{\pm1}$ and, in addition, the
pair $av^{-1}$. Each one of these pairs can be followed by one of
$\left(2k-3\right)\left(2k-3\right)+1$ of these pairs, which shows
the lower bound. Since every possible pair of letters is followed
by one of less than $\left(2k-1\right)^{2}$ possible pairs, the growth
rate is strictly less than $2k-1$. In fact, the exact growth rate
is the largest (real) root of $\lambda^{5}-\left(2k-3\right)\lambda^{4}-3\lambda^{3}+\left(2k-3\right)\lambda^{2}+3\lambda+\left(2k-3\right)$,
which tends to $2k-3$ as $k\to\infty$. \\

\item If $Y=\left\{ a,a^{-1}\right\} $ then every instance of $a^{\pm1}$
in a word from $A_{Y,Z,v}$ is in the form $\ldots va^{m}v^{-1}\ldots$
for some $0\ne m\in\mathbb{Z}$. The exponential growth rate here
is the largest (real) root of $\lambda^{4}-\left(2k-2\right)\lambda^{3}+\left(2k-4\right)\lambda^{2}+\left(2k-2\right)\lambda-6k+11$,
which again approaches $2k-3$ from above as $k\to\infty$. Again,
looking at pairs of letters one can easily infer the growth rate is
strictly less than $2k-1$.\\

\item Finally, if $|Z|=1$, namely $Z=\left\{ v^{-1}\right\} $, then $v^{-1}$
is followed only by $v^{-1}$, and $v$ is preceded only by $v$.
So $A_{Y,Z,v}$ consists of all cyclic words not containing $v^{\pm1}$
(together with $\left\{ v^{m}\,\middle|\, m\in\mathbb{Z}\right\} $).
Hence, the growth rate is exactly $\left(2k-3\right)$.
\end{itemize}
In particular, this analysis gives rise to the following naive bound:
\begin{cor}
\label{cor:Every-triplet-is-less-than-2k-1}Let $k\geq2$. Every triplet
$\left(Y,Z,v\right)$ satisfies
\[
\limsup_{N\to\infty}\sqrt[N]{\left|A_{Y,Z,v}^{N}\right|}<2k-1.
\]

\end{cor}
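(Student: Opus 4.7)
The plan is to bound $\limsup_{N\to\infty}\sqrt[N]{|A_{Y,Z,v}^N|}$ uniformly for every triplet via a single transfer-matrix argument, bypassing the case distinction made in the three bullets above. First I will define a $2k\times 2k$ non-negative transfer matrix $T=T_{Y,Z,v}$ indexed by $X^{\pm 1}$, setting $T_{ab}=1$ if the pair $ab$ is reduced (i.e.\ $b\neq a^{-1}$) and the associated Whitehead edge $\{a,b^{-1}\}$ does not cross between $Y$ and $Z$, and $T_{ab}=0$ otherwise. Every cyclic word in $A_{Y,Z,v}^N$ lifts, after a choice of starting letter, to a closed walk of length $N$ in the digraph of $T$, so
\[
\left|A_{Y,Z,v}^N\right|\;\leq\;\mathrm{tr}(T^N),
\]
and it will be enough to show that the Perron eigenvalue $\rho(T)$ is strictly less than $2k-1$.

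Next I would compare $T$ with the unrestricted reduction matrix $T_0$ defined by $(T_0)_{ab}=1$ iff $b\neq a^{-1}$. For every $k\geq 2$ the matrix $T_0$ is irreducible (from any letter, any other letter is reached in at most two reduced steps) and has $\rho(T_0)=2k-1$, witnessed by the uniform Perron eigenvector. Since $T\leq T_0$ entrywise, it remains to produce a single entry where the inequality is strict. Because $Y$ and $Z$ are both nonempty, I pick $a\in Y$ and $b\in Z^{-1}$; then $a^{-1}\in Y^{-1}$ is disjoint from $Z^{-1}$, so $b\neq a^{-1}$ and $(T_0)_{ab}=1$, while the Whitehead edge $\{a,b^{-1}\}$ does connect $Y$ with $Z$ by construction, forcing $T_{ab}=0$.

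The conclusion $\rho(T)<\rho(T_0)=2k-1$ will then follow from strict monotonicity of the Perron eigenvalue: if $A\leq B$ entrywise with $A\neq B$ and $B$ irreducible, then $\rho(A)<\rho(B)$. This last step is the one I expect to require the most care to quote precisely, though it is a standard consequence of Perron--Frobenius, obtained by testing $Av$ against the strictly positive Perron eigenvector $v$ of $B$. The only other bookkeeping concern is the passage from cyclic words to closed walks in the digraph of $T$, but since only an upper bound on the exponential growth rate is needed, the inequality $|A_{Y,Z,v}^N|\leq\mathrm{tr}(T^N)$ is already enough and no careful equivalence-class counting is required.
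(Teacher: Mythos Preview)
Your argument is correct. The transfer-matrix setup is sound: each cyclically reduced representative of a cyclic word in $A_{Y,Z,v}^{N}$ gives a closed walk of length $N$ in the digraph of $T$, distinct cyclic words have disjoint sets of representatives, so $|A_{Y,Z,v}^{N}|\le\operatorname{tr}(T^{N})$. The matrix $T_{0}$ is irreducible for $k\ge 2$ with $\rho(T_{0})=2k-1$, and your choice of $a\in Y$, $b^{-1}\in Z$ does produce an entry where $T<T_{0}$ (since $Y\cap Z=\emptyset$ forces $b\ne a^{-1}$). The strict monotonicity $\rho(T)<\rho(T_{0})$ for $0\le T\lneq T_{0}$ with $T_{0}$ irreducible is indeed a standard Perron--Frobenius fact; the sketch you give (pairing with the positive left Perron vector of $T_{0}$, then using simplicity of the Perron eigenspace of $T_{0}$) is the right one.

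Your route differs from the paper's. The paper does not prove the corollary directly; it obtains it as a consequence of two ingredients assembled for other purposes: Proposition~\ref{prop:Every-triplet-except...-is-negligible} (a laborious case analysis giving the much sharper bound $<2k-3$ for most triplets, carried out in Section~\ref{sec:Growth-of-partitions}) together with the three bullet points just before the corollary, which handle the exceptional triplets by ad hoc pair-counting. Your Perron--Frobenius argument is more economical and uniform for the statement at hand, treating all triplets at once with no case split. What the paper's longer route buys is the precise growth rates in the exceptional cases and the strict bound $2k-3$ in the generic cases, both of which are needed for the main theorems; your argument yields only the cruder bound $<2k-1$, but that is exactly what the corollary asserts.
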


\subsection{Proof of Theorem \ref{thm:growth-of-cyclic-prims}\label{sub:Proof-of-Theorem}}

The moral of the following proof will be that even when the set $A_{Y,Z,v}$
has exponential growth rate larger than $2k-3$, every cyclic \emph{primitive}
$\left[w\right]\in A_{Y,Z,v}$ can be shortened `fast enough' by the
corresponding Whitehead automorphism $\phi_{Y,Z,v}$. Recall that
the second statement of Theorem \ref{thm:growth-of-cyclic-prims}
is that for $k\geq3$,
\begin{equation}
\limsup_{N\to\infty}\sqrt[N]{\left|C_{k,N}\setminus L_{k,N}\right|}<2k-3.\label{eq:c-kn-minus-L-kN}
\end{equation}
As mentioned in Section \ref{sec:Introduction}, when $k\geq3$ the
other two statements of the theorem follow from \prettyref{eq:c-kn-minus-L-kN},
and for $k=2$, the relevant statement ($\limsup_{N\to\infty}\sqrt[N]{\left|C_{2,N}\right|}=1$)
is already known \cite[Prop. 1.4]{myasnikov2003automorphic}. Therefore,
we shall prove \prettyref{eq:c-kn-minus-L-kN} by induction on $k$,
assuming only that $k\geq3$ and that for $k-1$ we have

\[
\limsup_{N\to\infty}\sqrt[N]{\left|C_{k-1,N}\right|}=2k-5.
\]

Assume then that $k\geq3$, and let $M_{k,N}\subseteq C_{k,N}\backslash L_{k,N}$\marginpar{$M_{k,N}$}
be the set of cyclic primitive words such that either
\begin{itemize}
\item $\left[w\right]\in A_{Y,Z,v}^{N}$ with $Y=\left\{ x\right\} $ and
$x$ appearing at least 4 times in $\left[w\right]$, or
\item $\left[w\right]\in A_{Y,Z,v}^{N}$ with $Y=\left\{ x,x^{-1}\right\} $
and $\left[w\right]$ containing at least $2$ instances of $vx^{m}v^{-1}$
(for any $0\ne m\in\mathbb{Z}$).
\end{itemize}
Let $M_{k,N}^{c}$\marginpar{$M_{k,N}^{c}$} denote the complement
of $M_{k,N}$ inside $C_{k,N}\backslash L_{k,N}$. We proceed by showing
that the exponential growth rates of $\left|M_{k,N}\right|$ and $|M_{k,N}^{c}|$
are both strictly less than $2k-3$.
\begin{lem}
\label{lem:growth-of-MckN}
\[
\limsup_{N\to\infty}\sqrt[N]{\left|M_{k,N}^{c}\right|}<2k-3.
\]
\end{lem}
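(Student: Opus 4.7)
The plan is to decompose $M_{k,N}^c$ by the type of valid Whitehead triplet $(Y,Z,v)$ admitted by $[w]$, and to bound each piece using the Whitehead automorphism $\phi_{Y,Z,v}$ together with the letter-disjoint primitivity criterion of Proposition \ref{prop: w1w2-prim-then-one-of-them-is-prim}. Since $[w]$ is primitive, Theorem \ref{thm:in-free-factor-then-cut-vertex} provides at least one valid triplet. Non-essential triplets already yield growth strictly below $2k-3$ by Proposition \ref{prop:Every-triplet-except...-is-negligible}, and the essential case $|Z|=1$ forces $[w]\in F_{k-1}$ as in the bullet-point analysis preceding Corollary \ref{cor:Every-triplet-is-less-than-2k-1}, giving $|C_{k-1,N}|=O((2k-5)^{N})$ by the inductive hypothesis on $k$.

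For the essential case $Y=\{x,x^{-1}\}$ with fewer than two instances of $vx^m v^{-1}$: zero instances reduces to $F_{k-1}$; for exactly one instance I write $[w]=[(vx^mv^{-1})u]$ cyclically, with $u$ avoiding $x^{\pm1}$ and $|m|\geq 2$ (using $[w]\notin L_{k,N}$). Applying $\phi_{Y,Z,v}$ collapses the block to give $[\phi(w)]=[x^m u]$, a primitive of cyclic length $N-2$. Since $x^m$ and $u$ are letter-disjoint, Proposition \ref{prop: w1w2-prim-then-one-of-them-is-prim} forces $u$ to be primitive (as $x^m$ is not primitive when $|m|\geq 2$); hence $u\in C_{k-1,N-2-|m|}$. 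Summing over the cyclic positions of the block and over $|m|\geq 2$ yields a bound of order $N\cdot(2k-5)^{N}$, of strictly smaller exponential order than $(2k-3)^{N}$.

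For the essential case $Y=\{x\}$ with $x$ appearing at most three times: let $m=\#x$ and $m'=\#x^{-1}$. The case $m+m'=0$ again reduces to $F_{k-1}$, and $m+m'=1$ is excluded by $[w]\notin L_{k,N}$; for $m+m'\geq 2$, applying $\phi_{\{x\},Z,v}$ yields an injection onto a primitive $[\phi(w)]$ of cyclic length $N-(m+m')$, obtained from $[w]$ by deleting the $v^{-1}$ after each $x$ and the $v$ before each $x^{-1}$. The plan is then to identify a cyclic factorization $[\phi(w)]=[w_1 w_2]$ with $w_1,w_2$ letter-disjoint and to apply Proposition \ref{prop: w1w2-prim-then-one-of-them-is-prim}, forcing one factor to be primitive in a free subgroup of rank at most $k-1$, so that the inductive hypothesis on $|C_{k-1,\cdot}|$ bounds the count by $N^{O(1)}\cdot (2k-5)^{N-O(1)}$. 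Summing over $m\leq 3$ and $m'$ with $m+m'\geq 2$ preserves the strictly subexponential rate relative to $(2k-3)^{N}$. The main obstacle is this last case: unlike $Y=\{x,x^{-1}\}$, the at most three occurrences of $x$ and the arbitrarily many occurrences of $x^{-1}$ need not be consolidated into a single block, so producing a letter-disjoint cyclic factorization of $[\phi(w)]$ requires case analysis on $(m,m')$ and on which generators appear in the intervening $x$-free segments; however, since $m\leq 3$ bounds the combinatorial branching polynomially in $N$, the inductive bound still delivers the desired exponential rate strictly below $2k-3$.
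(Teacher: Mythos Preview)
Your treatment of the cases $|Z|=1$ and $Y=\{x,x^{-1}\}$ is correct and coincides with the paper's argument. The gap is entirely in the case $Y=\{x\}$.

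First, a reading issue: in the definition of $M_{k,N}$, ``$x$ appearing at least four times'' is meant as the \emph{total} number of occurrences of $x^{\pm1}$ (equivalently the degree of the vertex $x$ in $\Gamma(w)$, which is exactly $E(\{x\},v)$). So in $M_{k,N}^c$ one has $\#x+\#x^{-1}\le 3$, not merely $\#x\le 3$; there are no ``arbitrarily many occurrences of $x^{-1}$'' to handle.

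More seriously, even with at most three occurrences of $x^{\pm1}$, the letter-disjoint factorization route does not work. After applying $\phi_{\{x\},Z,v}$ you get a primitive of the shape $[x^{\pm1}w_1x^{\pm1}w_2]$ (or with three $x$-blocks), where $w_1,w_2$ avoid $x$ but may use \emph{all} the remaining letters. Unless the $x^{\pm1}$'s happen to be adjacent there is no nontrivial cyclic splitting into letter-disjoint pieces: $w_1$ and $w_2$ are not letter-disjoint from each other, and the $x$'s are separated. Even when a letter-disjoint split exists, Proposition~\ref{prop: w1w2-prim-then-one-of-them-is-prim} only forces \emph{one} factor to be primitive; the other factor, unconstrained in $F_{k-1}$, already has growth $2k-3$, so the product bound cannot beat $(2k-3)^{N}$. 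Your claimed $N^{O(1)}(2k-5)^{N-O(1)}$ is therefore unjustified, and the remark that ``$m\le 3$ bounds the combinatorial branching polynomially'' addresses only the number of block-configurations, not the exponential count of the $w_i$.

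The paper's argument here is genuinely different and does not pass through Proposition~\ref{prop: w1w2-prim-then-one-of-them-is-prim}. One writes $[w]=[(u_1xu_2)^{\pm1}w_1(u_1xu_2)^{\pm1}w_2]$ with $u_1,u_2$ the \emph{maximal} common strings flanking each occurrence of $x^{\pm1}$, and applies the automorphism $x\mapsto u_1^{-1}xu_2^{-1}$ to obtain a primitive $[w']=[x^{\pm1}w_1x^{\pm1}w_2]$ of length $N'=N-2(|u_1|+|u_2|)$. Maximality forces the vertex $x$ in $\Gamma(w')$ to have at least two distinct neighbours outside $\{x,x^{-1}\}$, so any valid triplet $(Y',Z',v')$ for $[w']$ must partition $(X\setminus\{x\})^{\pm1}$ nontrivially. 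Corollary~\ref{cor:Every-triplet-is-less-than-2k-1} applied in $F_{k-1}$ then gives growth strictly below $2(k-1)-1=2k-3$ for the pair $(w_1,w_2)$, and since the $u_i$ contribute only $(2k-3)^{|u_1|+|u_2|}=(2k-3)^{(N-N')/2}$, the desired bound follows. The idea you are missing is this structural use of the Whitehead graph of the shortened word rather than a letter-disjoint splitting.
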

\begin{proof}
In light of Proposition \ref{prop:Every-triplet-except...-is-negligible}
we only need to consider cyclic primitive words $[w]\in M_{k,N}^{c}$
with a Whitehead partition $(Y,Z,v)$ where $\min\left\{ |Y|,|Z|\right\} =1$
or $Y=\{x,x^{-1}\}$. 

If $|Z|=1$ then by definition $Z=\{v^{-1}\}$, and every cyclic word
in $A_{Y,Z,v}$ is either a power of $v$ or a word in the alphabet
$X^{\pm1}\setminus\left\{ v^{\pm1}\right\} $. The set $\{v^{n}\}$
is clearly negligible. In the latter case, $[w]$ is primitive in
the letters $X\setminus\left\{ v^{\pm1}\right\} $. It follows from
the induction hypothesis that the exponential growth rate of this
set of cyclic primitives is $2k-5<2k-3$ .

Assume next that $Y=\{x,x^{-1}\}$ and $[w]$ has exactly one instance
of $vx^{m}v^{-1}$ for some $\left|m\right|\geq2$ (the case $\left|m\right|=1$
is impossible as $M_{k,N}^{c}\cap L_{k,N}=\emptyset$). Pick a representative
of $\left[w\right]$ of the form $\left[x^{m}w'\right]$, where $w'$
has length $<N$ and does not contain the letter $x$. By Proposition
\ref{prop: w1w2-prim-then-one-of-them-is-prim}, $w'$ is primitive
in $F_{k-1}$ hence this set has exponential growth rate $2k-5$.

The remaining case is $Y=\{x\}$ and $x^{\pm1}$ appearing exactly
twice or thrice in $[w]$ (again, if $x^{\pm1}$ appears only once,
then $\left[w\right]$ belongs to $L_{k,N}$). Consider first the
case where $x^{\pm1}$ appears exactly \textbf{twice}:\\
We can write $\left[w\right]$ in the form
\[
\left[\left(u_{1}xu_{2}\right)^{\pm1}w_{1}\left(u_{1}xu_{2}\right)^{\pm1}w_{2}\right],
\]
where $u_{1},u_{2}$ are maximal sequence of letters preceding and
following both instances of $x^{\pm1}$, respectively. 

Let $\ell_{i}$ be the length of $u_{i}$. Up to a factor of $N^{3}$,
which is negligible in terms of exponential growth rates, we know
$\ell_{1}$, $\ell_{2}$, $\left|w_{1}\right|$ and $\left|w_{2}\right|$.
There are about $\left(2k-3\right)^{\ell_{1}+\ell_{2}}$ options for
the values of $u_{1}$ and $u_{2}$. The automorphism $\psi\in\mathrm{Aut}\left(\F_{k}\right)$
which maps $x\mapsto u_{1}^{-1}xu_{2}^{-1}$ and leaves unchanged
the remaining letters, maps $\left[w\right]$ to the primitive cyclic
word
\[
\left[w'\right]=\left[x^{\pm1}w_{1}x^{\pm1}w_{2}\right].
\]
Let $N'=N-2\left(\ell_{1}+\ell_{2}\right)$ be the length of $w'$.
We claim that the number of possible $w'$ is bounded above by $C\cdot\left(2k-3-\varepsilon\right)^{N'}$for
some $C,\varepsilon>0$, $\varepsilon$ small. This will suffice as
the number of possible $\left[w\right]$ is then bounded by some polynomial
in $N$ times
\[
\left(2k-3-\varepsilon\right)^{N'}\cdot\left(2k-3\right)^{\ell_{1}+\ell_{2}}=\left(2k-3-\varepsilon\right)^{N-2\ell_{1}-2\ell_{2}}\cdot\sqrt{2k-3}^{2\ell_{1}+2\ell_{2}}\leq\left(2k-3-\varepsilon\right)^{N}.
\]
Firstly, if one of $w_{1}$ or $w_{2}$ is trivial, then as in the
preceding case, the number of options for $\left[w'\right]$ is at
most some constant times $\left(2k-5\right)^{N'}$. So, assume $w_{1},w_{2}\ne1$.
The word $\left[w'\right]$ belongs to some $A_{Y',Z',v'}^{N'}$.
By the maximality of $u_{1}$ and $u_{2}$, each of the vertices $x$
and $x^{-1}$ in the Whitehead graph $\Gamma\left(w'\right)$ has
at least two neighbors in $X^{\pm1}\setminus\left\{ x,x^{-1}\right\} $.
Hence, $Y'\ne\left\{ x\right\} ,\left\{ x^{-1}\right\} ,\left\{ x,x^{-1}\right\} $.
Also, it is not possible that $v'=x^{\pm1}$ and $Z'=\left\{ \left(v'\right)^{-1}\right\} $,
since cyclic words corresponding to this triplet are either words
in $X\setminus\left\{ v'\right\} $ or powers of $v'$. Hence, the
triplet $\left(Y',Z',v'\right)$ induces some non-trivial partition
of $\left(X\setminus\left\{ x\right\} \right)^{\pm1}$ (both $Y'$
and $Z'$ intersect $ $$\left(X\setminus\left\{ x\right\} \right)^{\pm1}$).
Hence, $w_{1}$ and $w_{2}$ are words (albeit not cyclic) corresponding
to some non-trivial triplet partitioning $\left(X\setminus\left\{ x\right\} \right)^{\pm1}$.
But by Corollary \ref{cor:Every-triplet-is-less-than-2k-1}, the exponential
growth of such subsets is strictly less than $2\left(k-1\right)-1=2k-3$.

Finally, consider the case where $Y=\left\{ x\right\} $ and $x^{\pm1}$
appears exactly \textbf{three times} in $\left[w\right]$: \\
The proof that this subset grows slower than $\left(2k-3\right)^{N}$
is very similar to the previous case. This time, each such word is
of the form
\[
\left[\left(u_{1}xu_{2}\right)^{\pm1}w_{1}\left(u_{1}xu_{2}\right)^{\pm1}w_{2}\left(u_{1}xu_{2}\right)^{\pm1}w_{3}\right]
\]
with $u_{1},u_{2}$ maximal. It can be shortened via an automorphism
to 
\[
\left[w'\right]=\left[x^{\pm1}w_{1}x^{\pm1}w_{2}x^{\pm1}w_{3}\right]
\]
of length $N'$. Again, up to a polynomial factor of $N^{4}$ we know
$\ell_{1},\ell_{2},\left|w_{1}\right|,\left|w_{2}\right|$ and $\left|w_{3}\right|$,
and we claim that the number of options for $\left[w'\right]$ is
bounded by some constant times $\left(2k-3-\varepsilon\right)^{N'}$
with $\varepsilon>0$. Hence the total number of options for $\left[w\right]$
is bounded by some polynomial in $N$ times
\[
\left(2k-3-\varepsilon\right)^{N-3\ell_{1}-3\ell_{2}}\cdot\left(2k-3\right)^{\ell_{1}+\ell_{2}}\leq\left(2k-3-\varepsilon\right)^{N}
\]
for $\varepsilon$ small enough. Indeed, if two of $w_{1}$, $w_{2}$
and $w_{3}$ are trivial, we are again in the same situation as in
the case $Y=\left\{ x,x^{-1}\right\} $. Otherwise, the exact same
argument as before shows that $\left[w'\right]\in A_{Y',Z',v'}^{N'}$
for some triplet $\left(Y',Z',v'\right)$ partitioning $\left(X\setminus\left\{ x\right\} \right)^{\pm1}$
non-trivially. 

This covered all the cases of $M_{k,N}^{c}$ and hence the lemma is
established.
\end{proof}
Now we move on to the remaining set $M_{k,N}$. The idea is to shorten
such words by applying appropriate Whitehead automorphisms until the
result falls outside of $M_{k,*}$ (i.e.~outside of $M_{k,n}$ for
all $n$). To achieve this goal we first consider cyclic words $[w]\in M_{k,N}$
such that the corresponding automorphism $\phi_{Y,Z,v}$ maps them
into $L_{k,*}$: $[\phi_{Y,Z,v}(w)]\in L_{k,*}$ . Denote this subset
by $\widetilde{L}_{k,N}\subseteq M_{k,N}$\marginpar{$\widetilde{L}_{k,N}$}.
We claim that:
\begin{lem}
\label{clm:growth-of-LkN}
\[
\limsup_{N\to\infty}\sqrt[N]{\left|\widetilde{L}_{k,N}\right|}<2k-3.
\]
\end{lem}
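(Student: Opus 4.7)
The plan is to apply the Whitehead automorphism $\phi:=\phi_{Y,Z,v}$ to each $[w]\in\widetilde{L}_{k,N}$, land in some $[w']\in L_{k,N'}$ with $N'=N-E(Y,v)$, and exploit the rigidity of $[w']$ to bound the count. First I would show that the distinguished letter of $[w']$ (the one appearing exactly once) must be the pivot $v$. The key observation is that $\phi$ preserves the count of $x^{\pm1}$ (in Case A, $\phi(x)=xv^{-1}$ and $\phi(x^{-1})=vx^{-1}$ each carry a single $x^{\pm1}$; in Case B, $\phi$ fixes $x^{\pm1}$) and of every letter $\ell\in X^{\pm1}\setminus\{x^{\pm1},v^{\pm1}\}$ (since $\phi(\ell)=v\ell v^{-1}$). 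Because $[w]\in M_{k,N}$ contains at least two occurrences of $x^{\pm1}$ and, lying in $C_{k,N}\setminus L_{k,N}$, has no letter appearing exactly once, the distinguished letter of $[w']$ cannot be $x$ or any other non-pivot letter, so it must be $v$; consequently $[w']$ contains exactly one $v^{\pm1}$.

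Next I would record the length deficit. In Case A, validity of $(\{x\},Z,v)$ together with reducedness forces every $x$ in $[w]$ to be followed by $v^{-1}$ and every $x^{-1}$ to be preceded by $v$, so $E(Y,v)=m$ where $m\geq 4$ is the number of $x^{\pm1}$'s in $[w]$ and $N'=N-m$. In Case B, the $x^{\pm1}$'s cluster into $p\geq 2$ blocks $vx^{m_i}v^{-1}$, giving $E(Y,v)=2p$ and $N'=N-2p$. Since $\phi$ is a bijection on cyclic words, $|\widetilde{L}_{k,N}|$ is bounded by the number of cyclic $[w']$'s with the forced structure, summed over the finitely many triplets $(Y,Z,v)$.

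In Case A, cutting $[w']$ at its unique $v^{\pm1}$ yields a reduced linear word of length $N'-1$ in $X^{\pm1}\setminus\{v^{\pm1}\}$ with $m$ letters in $\{x,x^{-1}\}$ and the remaining $N'-1-m$ letters drawn from an alphabet of size $2k-4$. Summing the crude bound $2\binom{N-m-1}{m}2^m(2k-4)^{N-2m-1}$ over $m\geq 4$ yields a sequence satisfying the linear recurrence $a_N=(2k-4)a_{N-1}+2a_{N-2}$, whose growth rate $(k-2)+\sqrt{(k-2)^2+2}$ is strictly less than $2k-3$ iff $2k-5>0$, i.e., for $k\geq 3$. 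In Case B the $p$ blocks of $[w]$ become $p$ maximal $x^{\pm1}$-runs of $[w']$ (the boundary $v^{\pm1}$'s of each block cancel against the $v^{\pm1}$'s inserted by $\phi$ around the adjacent non-$x$ non-$v$ letters), and the single $v^{\pm1}$ of $[w']$ lies in exactly one of the $p$ segments of $[w]$. Encoding this with a generating function in which each block contributes $\mathrm{Block}(u)=2u^{3}/(1-u)$, each non-special segment contributes $\mathrm{NonSpecial}(u)=(2k-4)u/(1-(2k-4)u)$, and the unique special segment contributes $\mathrm{Special}(u)=2u/(1-(2k-4)u)^{2}$, the dominant singularity $u^{*}$ is the smallest positive root of $\mathrm{Block}(u)\mathrm{NonSpecial}(u)=1$, i.e.\ of $2(2k-4)u^{4}=(1-u)(1-(2k-4)u)$; at $u=1/(2k-3)$ this product evaluates to $2/(2k-3)^{2}<1$ for $k\geq 3$, so $u^{*}>1/(2k-3)$ and the growth rate $1/u^{*}$ is strictly less than $2k-3$.

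The main technical obstacle is Case B: one must justify carefully that every $[w]\in\widetilde{L}_{k,N}\cap A_{Y,Z,v}^{N}$ is faithfully described by the block/segment encoding above, and in particular verify that the cancellation pattern between the block boundaries $v,\,v^{-1}$ and the $v^{\pm1}$'s inserted by $\phi$ in adjacent segments really leaves exactly one $v^{\pm1}$ surviving in $[w']$, located inside a single segment. Reducedness and boundary conditions between blocks and segments must be handled, but these are expected to contribute only polynomial factors that do not affect the asymptotic growth rate.
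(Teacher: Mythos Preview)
Your proposal is correct and rests on the same key observation as the paper: since the Whitehead automorphism $\phi_{Y,Z,v}$ changes only the number of occurrences of $v^{\pm1}$, the unique letter of $[\phi(w)]\in L_{k,*}$ must be $v^{\pm1}$, which forces a very constrained structure on $[w]$.

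Where you diverge from the paper is in how you exploit this structure. The paper stays on the $[w]$ side throughout: once it knows that all $v^{\pm1}$'s in $[w]$ save one are tied up in the pattern $xv^{-1}$/$vx^{-1}$ (Case~A) or $vx^{m}v^{-1}$ (Case~B), it writes $[w]=[v\,w']$ with $w'$ built from the obvious building blocks and bounds the growth directly. In Case~A this gives exactly your recurrence $a_N=(2k-4)a_{N-1}+2a_{N-2}$ (your detour through $[\phi(w)]$ is bijectively equivalent to the paper's block description, which is why you land on the same characteristic polynomial $\lambda^{2}-(2k-4)\lambda-2$). In Case~B the paper's argument is a one-liner: in the building-block word $w'$, every letter is followed by one of at most $2k-4$ letters, so the growth rate is at most $2k-4<2k-3$. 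Your generating-function computation reaches the same conclusion but is heavier machinery than needed.

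The ``main technical obstacle'' you flag in Case~B is not a genuine obstacle. You do not need to track the cancellation pattern of $v^{\pm1}$'s under $\phi$ at all: once you know $[\phi(w)]$ has a single $v^{\pm1}$ and that $\phi$ only affects $v$-counts, you get the block/segment decomposition of $[w]$ itself, and can count $[w]$ directly. Passing back and forth between $[w]$ and $[\phi(w)]$ to justify the encoding is what makes your Case~B feel delicate; dropping that passage and counting on the $[w]$ side, as the paper does, removes the difficulty entirely.
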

\begin{proof}
Consider first the words $[w]\in\widetilde{L}_{k,N}$ with triplet
$(Y,Z,v)$ where $Y=\left\{ x\right\} $. The effect of $\phi=\phi_{Y,Z,v}$
on $\left[w\right]$ is precisely that: each instance of the form
$\ldots xv^{-1}\ldots$ becomes simply $\ldots x\ldots$, and each
instance of $\ldots vx^{-1}\ldots$ turns into $\ldots x^{-1}\ldots$.
In particular, the only letter whose number of appearances in $\left[w\right]$
is changed by $\phi$ is $v$. By definition, $\left[w\right]\in\widetilde{L}_{k,N}\subseteq M_{k,N}\subseteq C_{k,N}\setminus L_{k,N}$,
so the letter which $[\phi\left(w\right)]$ contains exactly once
is necessarily $v^{\pm1}$. Hence, aside for one, all occurrences
of $v^{\pm1}$ in $\left[w\right]$ are as part of either $xv^{-1}$
or $vx^{-1}$. We deduce that $\left[w\right]$ is of the form
\[
\left[vw'\right]
\]
with $w'$ being a word of length $N-1$ in $2\left(k-2\right)$ building
blocks of length $1$: $\left(X\setminus\left\{ x^{\pm1},v^{\pm1}\right\} \right)^{\pm1}$
and $2$ building blocks of length 2: $\left(xv^{-1}\right)^{\pm1}$.
(In other words, $w'$ is any word in $F_{k-1}$ but where one of
the letters is of length $2$). This kind of words clearly has exponential
growth rate strictly less than $2k-3$. (To be precise, the growth
rate is the larger root of $\lambda^{2}-\left(2k-4\right)\lambda-2$.)

The complement of this latter subset inside $\widetilde{L}_{k,N}$
consists of primitive cyclic words belonging to $A_{Y,Z,v}^{N}$ with
$Y=\left\{ x,x^{-1}\right\} $. This time, $\phi$ turns each instance
of $\ldots vx^{m}v^{-1}\ldots$ into $\ldots x^{m}\ldots$. The same
arguments as before show that if $\left[w\right]$ is such a word,
then
\[
\left[w\right]=\left[vw'\right]
\]
where $w'$ is composed of building blocks from $\left(X\setminus\left\{ x^{\pm1},v^{\pm1}\right\} \right)^{\pm1}$
together with $\left\{ vx^{m}v^{-1}\,\middle|\,0\ne m\in\mathbb{Z}\right\} $.
Every letter in $w'$ is followed by one of at most $\left(2k-4\right)$
possible letters, showing this type of words also has exponential
growth rate $<2k-3$, and thus completing the proof.\end{proof}
\begin{lem}
\label{lem:growth-of-MkN}
\[
\limsup_{N\to\infty}\sqrt[N]{\left|M_{k,N}\right|}<2k-3.
\]
\end{lem}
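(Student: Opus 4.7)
The plan is to use the associated Whitehead automorphism $\phi_{Y,Z,v}$ as a length-reducing operation and to close the estimate by strong induction on $N$, combining with Lemma~\ref{clm:growth-of-LkN} (bounding $\widetilde{L}_{k,N}$) and Lemma~\ref{lem:growth-of-MckN} (bounding $M_{k,N}^{c}$).

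First I would verify that in both cases defining $M_{k,N}$, the edge count $E(Y,v)$ is at least $4$. In Case~1 ($Y=\{x\}$), validity of the triplet forces every $x$ in $[w]$ to be immediately followed by $v^{-1}$ and every $x^{-1}$ immediately preceded by $v$; each such occurrence contributes one $\{x,v\}$-edge to $\Gamma(w)$, so $E(Y,v)$ equals the total number of $x^{\pm1}$-occurrences in $[w]$, which is at least $4$ by hypothesis. In Case~2 ($Y=\{x,x^{-1}\}$), the same validity argument forces every maximal run of $x^{\pm1}$'s to appear in the form $vx^{m}v^{-1}$; each such block contributes exactly two $Y$-to-$v$ edges (one from its boundary adjacency $vx^{\pm 1}$ and one from $x^{\pm 1}v^{-1}$), while the interior $x^{\pm 1}x^{\pm 1}$-adjacencies give $\{x,x^{-1}\}$-edges internal to $Y$; hence $E(Y,v)=2b\ge 4$, where $b\ge 2$ is the number of blocks.

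With $E(Y,v)\ge 4$, Proposition~\ref{prop:white-uto-reduces} gives $|\phi(w)|_{c}\le N-4$. Because $\phi_{Y,Z,v}$ is an automorphism of $F_{k}$, the induced map $[w]\mapsto[\phi(w)]$ on cyclic words is a bijection, so for each fixed triplet it is injective. When $[w]\in M_{k,N}\setminus\widetilde{L}_{k,N}$, the image $[\phi(w)]$ lies in $C_{k,N'}\setminus L_{k,N'}$ by definition of $\widetilde{L}$; summing over the $O(k^{2})$ relevant triplets yields
\[
|M_{k,N}\setminus\widetilde{L}_{k,N}|\;\le\;C_{k}\sum_{N'\le N-4}|C_{k,N'}\setminus L_{k,N'}|,
\]
where $C_k$ denotes the number of triplets of the Case~1 or Case~2 form. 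The proof then proceeds by strong induction on $N$: assuming $|C_{k,n}\setminus L_{k,n}|\le K(2k-3-\eta)^{n}$ for all $n<N$ (which is equation~(\ref{eq:c-kn-minus-L-kN}) restricted to smaller lengths), geometric summation, combined with Lemma~\ref{clm:growth-of-LkN} for $|\widetilde{L}_{k,N}|$ and Lemma~\ref{lem:growth-of-MckN} for $|M_{k,N}^{c}|$, produces $|M_{k,N}|\le K(2k-3-\eta)^{N}$ provided the contraction ratio $C_{k}/\bigl((2k-3-\eta)^{3}(2k-4-\eta)\bigr)$ is strictly less than $1$. This holds comfortably for all $k\ge 3$: at $k=3$ one has $C_3=36<54=(2k-3)^3(2k-4)$, and for larger $k$ the denominator grows like $\Theta(k^{4})$ while $C_k=O(k^{2})$.

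The main obstacle is the self-referential nature of the induction: the recursive bound on $|C_{k,N}\setminus L_{k,N}|$ involves $|M_{k,N}|$ itself (through the inequality $|C_{k,N}\setminus L_{k,N}|\le|M_{k,N}|+|M_{k,N}^{c}|$), so the constants $K$ and $\eta$ must be chosen so the inductive step genuinely contracts. A secondary subtlety lies in Case~2's edge-counting: one must verify that only the two boundary adjacencies of each $vx^{m}v^{-1}$-block contribute to $E(Y,v)$, while the interior $x^{\pm 1}x^{\pm 1}$-adjacencies remain internal to $Y$ and so do not shorten the image under $\phi$.
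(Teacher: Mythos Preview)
Your approach is correct and rests on the same two observations as the paper's proof: that the chosen Whitehead automorphism shortens by at least $4$ (your edge-counting in both cases is accurate), and that the number $6k(k-1)$ of admissible triplets is small compared with the fourth power of $2k-3$. The organizational difference is that the paper \emph{iterates} the Whitehead move until the word first leaves $M_{k,*}$, so that it lands in $M_{k,n}^{c}\cup\widetilde{L}_{k,n}$ for some $n\le N$, and then encodes the original word by the pair (terminal word, sequence $(\phi_1,\ldots,\phi_r)$ of automorphisms used); this proves the lemma as a standalone consequence of Lemmas~\ref{lem:growth-of-MckN} and~\ref{clm:growth-of-LkN}, the decisive numerical inequality being $\sqrt[4]{6k(k-1)}<2k-3$. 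You instead take a \emph{single} step and close by strong induction on $|C_{k,n}\setminus L_{k,n}|$, which merges the proof of this lemma with that of~\eqref{eq:c-kn-minus-L-kN}; your contraction condition $6k(k-1)<(2k-3)^{3}(2k-4)$ is the natural one-step analogue and does hold for all $k\ge 3$. The paper's packaging avoids the self-referential bookkeeping you flag as the main obstacle, but once $\eta$ is chosen below the $\varepsilon$'s of the two auxiliary lemmas and $K$ is taken large enough for the finitely many base cases and to absorb the additive $|M_{k,N}^{c}|+|\widetilde{L}_{k,N}|$ term, your induction closes exactly as you outline.
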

\begin{proof}
Every $[w]\in M_{k,N}$ is equipped with some triplet $(Y,Z,v)$ from
the definition of $M_{k,N}$ (so $Y=\left\{ x\right\} $ or $Y=\left\{ x,x^{-1}\right\} $
for some $x\in X^{\pm1}$). First, we observe that the corresponding
Whitehead automorphism $\phi_{Y,Z,v}$ shortens $\left[w\right]$
by at least 4. There are in total $2k\left(2k-2\right)$ triplets
with $Y=\left\{ x\right\} $ and $k\left(2k-2\right)$ triplets with
$Y=\left\{ x,x^{-1}\right\} $. Let $\mathcal{W}$ denote the set
of these $2k\left(2k-2\right)+k\left(2k-2\right)=6k\left(k-1\right)$
possible Whitehead automorphisms. 

In other words, for every $[w]\in M_{k,N}$ there exists $\phi_{1}\in\mathcal{{W}}$
such that $|\phi_{1}\left(w\right)|_{c}=N'\leq N-4$. If $[\phi_{1}\left(w\right)]\in M_{k,N'}$
we apply the corresponding automorphism $\phi_{2}\in\mathcal{{W}}$
and obtain a cyclic word of length $\leq N-8$. Since for all $n$
we have $C_{k,n}=L_{k,n}\cup M_{k,n}\cup M_{k,n}^{c}$, one can continue
this process until the resulting cyclic word is either in $M_{k,n}^{c}$
or in $\widetilde{L}_{k,n}$ for some $2\leq n\leq N$ (note that
each cyclic word in $M_{k,*}$ is of length $\geq8$). Let $\left[\widehat{w}\right]\in M_{k,n}^{c}\cup\widetilde{L}_{k,n}$
be the cyclic-word we obtain this way, i.e. 
\[
\left[w\right]\overset{\phi_{1}}{\to}\left[w_{1}\right]\overset{\phi_{2}}{\to}\left[w_{2}\right]\to\ldots\overset{\phi_{r}}{\to}\left[\widehat{w}\right].
\]
Each element in $M_{k,N}$ is uniquely determined by $r$, $n=\left|\widehat{w}\right|_{c}$,
$\left[\widehat{w}\right]\in M_{k,n}^{c}\cup\widetilde{L}_{k,n}$
and $(\phi_{1},\phi_{2},\cdots,\phi_{r})\in\mathcal{{W}}^{r}$, where
$r\leq R=\left\lfloor \frac{N-n}{4}\right\rfloor $. For each $n$,
the number of possible tuples $(\phi_{1},\phi_{2},\cdots,\phi_{r})$
is 
\[
\sum_{i=0}^{R}\left|\mathcal{{W}}\right|^{i}=\sum_{i=0}^{R}\left[6k\left(k-1\right)\right]^{i}\leq\frac{\left[6k\left(k-1\right)\right]^{R+1}-1}{6k\left(k-1\right)-1}\leq2\left[6k\left(k-1\right)\right]^{R}.
\]
By Lemmas \ref{lem:growth-of-MckN} and \ref{clm:growth-of-LkN},
the possible number of $\left[\widehat{w}\right]$ of length $n$
is bounded from above by $C\cdot\left(2k-3-\varepsilon\right)^{n}$
for some $C,\varepsilon>0$. Summing over all possible values of $n$
we obtain:
\begin{eqnarray*}
\left|M_{k,N}\right| & \leq & 2C\cdot\sum_{n=2}^{N}\left(2k-3-\varepsilon\right)^{n}\cdot\left[6k\left(k-1\right)\right]^{R}\\
 & \leq & 2C\cdot\sum_{n=2}^{N}\left(2k-3-\varepsilon\right)^{n}\cdot(\sqrt[4]{6k(k-1)})^{N-n}.
\end{eqnarray*}
For $k\geq3$ we can pick $\varepsilon$ small enough so that 
\[
\sqrt[4]{6k\left(k-1\right)}<2k-3-\varepsilon,
\]
hence 
\[
|M_{k,N}|<2C\cdot N\cdot\left(2k-3-\varepsilon\right)^{N}.
\]

\end{proof}
This completes the proof of \eqref{eq:c-kn-minus-L-kN}, hence also
of Theorem \ref{thm:growth-of-cyclic-prims}. Theorem \ref{thm:growth-of-prims}
now follows by Proposition \ref{prop:prim-to-conj}.

\subsection{The growth of non-primitives belonging to free factors \label{sub:The-growth-of-non-primitives-in-free-factors}}

Finally, let us say some words about the variation of the proof required
for Corollary \ref{cor:growth-of-proper-free-factors}. Recall that
$S_{k,N}$ denotes the set of words of length $N$ in $\F_{k}$ belonging
to a proper free factor. We ought to show that $\left|S_{k,N}\right|$
grows exponentially with base $\left(2k-3\right)$. We already mentioned
on Page \pageref{page:lower-bound-for-Skn} why $\left(2k-3\right)$
is a lower bound. To show it is also an upper bound, we repeat similar
arguments as above%
\footnote{In fact, the proof of Corollary \ref{cor:growth-of-proper-free-factors}
alone could be shorter than the proof of Theorem \ref{thm:growth-of-cyclic-prims}.
The same shorter proof would show that $\limsup_{N\to\infty}\sqrt[N]{C_{k,N}}=2k-3$.
In other words, much of the complexity of the analysis in Sections
\ref{sub:Ingredients-for-the-growth-of-cyclic-primitives} and \ref{sub:Proof-of-Theorem}
is required only for showing the stronger result that the growth rate
of $\left|C_{k,N}\setminus L_{k,N}\right|$ is strictly smaller than
$2k-3$.%
}:

Firstly, the same argument as in Section \ref{sub:The-growth-of-primitives}
shows the exponential growth of $S_{k,N}$ is the same as the exponential
growth of $\overline{S_{k,N}}$\marginpar{$\overline{S_{k,N}}$},
the set of cyclic-words of length $N$ belonging to a proper free
factor. By Theorem \ref{thm:in-free-factor-then-cut-vertex}, each
$\left[w\right]\in\overline{S_{k,N}}$ belongs to some $A_{Y,Z,v}^{N}$.
For most triplets, Proposition \ref{prop:Every-triplet-except...-is-negligible}
shows they grow slower than $\left(2k-3\right)^{N}$. When $Z=\left\{ v^{-1}\right\} $,
$A_{Y,Z,v}^{N}\subseteq\overline{S_{k,N}}$ and grows exponentially
with base $\left(2k-3\right)$.

Consider next words in $A_{Y,Z,v}^{N}$ such that either
\begin{itemize}
\item $Y=\left\{ x,x^{-1}\right\} $ and there is exactly one instance of
$vx^{m}v^{-1}$, or 
\item $Y=\left\{ x\right\} $ and there are up to three instances of $x^{\pm1}$. 
\end{itemize}
It is evident that this set of words has exponential growth rate $\left(2k-3\right)$. 

The remaining words from $\overline{S_{k,N}}$, which we denote by
$Q_{k,N}$, can be described in a similar fashion to the cyclic words
from $M_{k,N}$. In a similar argument as in Lemma \ref{lem:growth-of-MkN},
we can shorten each word from $Q_{k,N}$ by the corresponding Whitehead
automorphisms until we get a word outside $Q_{k,N}$. Since we have
already seen that $\left|\overline{S_{k,N}}\setminus Q_{k,N}\right|$
has exponential growth rate $\left(2k-3\right)$, we can complete
the proof of Corollary \ref{cor:growth-of-proper-free-factors} in
the same manner we proved Lemma \ref{lem:growth-of-MkN}.$\qed$

\section{Most Triplets are Negligible\label{sec:Growth-of-partitions}}

The last section is dedicated to proving Proposition \ref{prop:Every-triplet-except...-is-negligible},
stating that for most triplets, the set $A_{Y,Z,v}$ has exponential
growth rate strictly smaller than $\left(2k-3\right)$. This is done
by way of considering different cases according to the cardinalities
$\left|Y\right|$ and $\left|Z\right|$, and treating each case separately.
To simplify the notation we denote $|Y|$ and $\left|Z\right|$ by
$y$ and $z$, respectively. Note that $y+z=2k-1$. The assumptions
of Proposition \ref{prop:Every-triplet-except...-is-negligible} are
that $y,z\geq2$ and $Y\neq\{x,x^{-1}\}$. The main technique is to
rely on the following intuitive lemma. We call a set of words in $\F_{k}$
\emph{Markovian} if it is closed under taking prefixes and if to every
$x\in X^{\pm1}$ corresponds a fixed subset $\Sigma_{x}\subseteq X^{\pm1}$
of letters which can follow $x$. Namely, if $w\in A$ is of length
$N$ and terminates with $x$, one can extend it to a word in $A$
of length $\left(N+1\right)$ by appending one of the letters from
$\Sigma_{x}$. Obviously, the sets $A_{Y,Z,v}$ are all Markovian
(to be precise, the set of all cyclically reduced representatives
of the cyclic words in some $A_{Y,Z,v}$ is Markovian).
\begin{lem}
\label{lem:technical}Let $A$ be a \emph{Markovian }set of words
in $\F_{k}$ and let $\alpha>1$. Assume that for each letter $x\in X^{\pm1}$
there is some $1\leq r=r\left(x\right)\in\mathbb{N}$ such that $x$
is followed by one of less than $\alpha^{r}$ possible $r$-tuples
of letters. Then the exponential growth of $A$ is less than $\alpha$.\end{lem}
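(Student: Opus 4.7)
The plan is to reduce the statement to a clean induction on word length after a preliminary uniformisation step. Let $N(x)$ denote the number of admissible $r(x)$-tuples following the letter $x$. Since $X^{\pm1}$ is finite and $N(x)^{1/r(x)} < \alpha$ for each $x$, the constant
\[
\alpha' \;:=\; \max_{x \in X^{\pm1}} N(x)^{1/r(x)}
\]
satisfies $\alpha' < \alpha$ together with the uniform bound $N(x) \leq (\alpha')^{r(x)}$ for every $x$. It therefore suffices to prove $|A_N| \leq C \cdot (\alpha')^N$ for some constant $C > 0$ and all $N \geq 1$, as this forces $\limsup_N |A_N|^{1/N} \leq \alpha' < \alpha$.

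For each $x \in X^{\pm1}$ and $m \geq 0$, let $E(x,m)$ denote the number of length-$m$ admissible continuations after a word ending in $x$ (with $E(x,0) = 1$), and set $F(m) := \max_x E(x,m)$. Since every word in $A_N$ is determined by its initial letter and a length-$(N-1)$ continuation, one has $|A_N| \leq 2k \cdot F(N-1)$. Unfolding the Markovian structure in blocks of length $r(x)$ yields the recursion
\[
E(x, m) \;=\; \sum_\sigma E\bigl(y_\sigma,\, m - r(x)\bigr) \;\leq\; N(x) \cdot F\bigl(m - r(x)\bigr) \;\leq\; (\alpha')^{r(x)} \cdot F\bigl(m - r(x)\bigr),
\]
valid for $m \geq r(x)$, where $\sigma$ ranges over the admissible $r(x)$-tuples after $x$ and $y_\sigma$ is the last letter of $\sigma$. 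Taking the maximum over $x$ gives $F(m) \leq \max_x (\alpha')^{r(x)} \cdot F(m - r(x))$ whenever $m \geq R := \max_x r(x)$.

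The remainder is a straightforward induction on $m$. The trivial bound $F(m) \leq (2k)^m$ is uniformly finite on the initial range $m \leq R$, so I may choose $C$ large enough that $F(m) \leq C (\alpha')^m$ already holds there. For $m > R$, picking the witnessing $r \in \{r(x)\}_x$ in the block recursion and applying the induction hypothesis yields $F(m) \leq (\alpha')^r \cdot C (\alpha')^{m-r} = C (\alpha')^m$, closing the induction. The one conceptually delicate move is the very first one: using the finiteness of $X^{\pm1}$ to upgrade the per-letter strict inequality $N(x) < \alpha^{r(x)}$ to a uniform comparison against a strictly smaller base $\alpha'$. This is precisely what converts the hypothesis "less than $\alpha^r$" into the strict conclusion that the exponential growth rate of $A$ is less than $\alpha$.
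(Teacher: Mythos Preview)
Your proof is correct and takes a genuinely different route from the paper's. The paper constructs a weight function $f:A\to\mathbb{R}$ (via an auxiliary ``block'' subset $B\subseteq A$) so that the total weight $g(N)=\sum_{w\in A_N}f(w)$ satisfies the pointwise multiplicative bound $g(N+1)<\alpha\, g(N)$, and then observes that $g(N)$ and $|A_N|$ are comparable. You instead first exploit the finiteness of $X^{\pm1}$ to pass to the uniform smaller base $\alpha'=\max_x N(x)^{1/r(x)}<\alpha$, and then run a direct strong induction on the maximum continuation count $F(m)$ using the block recursion $E(x,m)=\sum_\sigma E(y_\sigma,m-r(x))$. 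Your argument is more elementary---it avoids the weighting trick entirely and reduces everything to a one-line induction---while the paper's approach yields the slightly sharper step-by-step inequality $g(N+1)<\alpha\, g(N)$ without introducing $\alpha'$. For the purposes of this lemma both are equally effective; your uniformisation step (replacing the per-letter strict inequalities by a single strictly smaller base) is exactly the simplification that makes the weighting machinery unnecessary.
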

\begin{proof}
For every $x\in X^{\pm1}$ and $1\leq i\leq r\left(x\right)$ let
$T_{x,i}$ denote the number of possible $i$-tuples which can follow
$x$ in words from $A$. (In particular, $T_{x,r\left(x\right)}<\alpha^{r\left(x\right)}$.)
For $w_{1},w_{2}\in A$ we say that $w_{2}$ is an \emph{$i$-extension
}of $w_{1}$ if $w_{1}$ is a prefix of $w_{2}$ and $\left|w_{2}\right|-\left|w_{1}\right|=i$.

Let $A_{N}$ be the set of words of length $N$ in $A$. Define a
subset $B\subseteq A$ by the following recursive rules: $A_{1}\subseteq B$
and if $w\in B$ terminates with $x$, then 
\begin{itemize}
\item all $i$-extensions of $w$ for $1\leq i\leq r\left(x\right)-1$ do
\emph{not }belong to $B$ (there are $T_{x,1}+T_{x,2}+\ldots+T_{x,r\left(x\right)-1}$
such words), and
\item all the $T_{x,r\left(x\right)}$ words which are $r\left(x\right)$-extensions
of $w$ belong to $B$.
\end{itemize}

Define $f:A\to\mathbb{R}$ as follows: for every $w\in B$ terminating
with the letter $x$,
\begin{itemize}
\item set f$\left(w\right)=1$, and
\item for every $1\leq i\leq r\left(x\right)-1$ and every $i$-extension
$u$ of $w$, set 
\[
f\left(u\right)=\frac{\left(T_{x,r\left(x\right)}\right)^{i/r\left(x\right)}}{T_{x,i}}<\frac{\alpha^{i}}{T_{x,i}}.
\]

\end{itemize}

Now, set $g\left(N\right)=\sum_{w\in A_{N}}f\left(w\right)$. For
$w\in B$ terminating with $x$ and $1\leq i\leq r\left(x\right)$,
the sum of $f$ over all $i$-extensions of $w$ is less than $\alpha$
times the sum over all $\left(i-1\right)$-extensions of $w$. We
obtain that $g\left(N+1\right)<\alpha\cdot g\left(N\right)$, so the
exponential growth rate of $g$ is $<\alpha$. We end the proof by
claiming that $c<\frac{g\left(N\right)}{\left|A_{N}\right|}<C$ for
some positive constants $c,C$. Indeed, one can set 
\[
c=\min_{\substack{x\in X^{\pm1}\\
1\leq i\leq r\left(x\right)
}
}\frac{\left(T_{x,r\left(x\right)}\right)^{i/r\left(x\right)}}{T_{x,i}},\,\,\,\,\, C=\max_{\substack{x\in X^{\pm1}\\
1\leq i\leq r\left(x\right)
}
}\frac{\left(T_{x,r\left(x\right)}\right)^{i/r\left(x\right)}}{T_{x,i}}.
\]

\end{proof}
We now return to the proof of Proposition \ref{prop:Every-triplet-except...-is-negligible}.
We shall use Lemma \ref{lem:technical} for the sets $A_{Y,Z,v}$
with $\alpha=2k-3$.

\noindent \textbf{Case 1:} $\boldsymbol{|Y|,|Z|\geq4}$: ~~~Let
$\left[w\right]\in A_{Y,Z,v}$ and let $x\in X^{\pm1}$ appear in
$w$. If $x\in Y$ then either the inverse of the following letter
is $v$ or it belongs to $Y\setminus\left\{ x\right\} $, so there
are $y$ options, and $y=2k-1-z\leq2k-5$. The same argument applies
for $x\in Z$ . Finally, the only letter that cannot follow $v$ is
$v^{-1}$, so $v$ is followed by:
\begin{itemize}
\item one of $y$ letters from $Y$, which are followed in turn by one of
$y$ letters, or 
\item one of $z-1$ letters from $Z\setminus\left\{ v^{-1}\right\} $, which
are followed in turn by one of $z$ letters, or 
\item $v$, which is followed by one of $2k-1$ letters.
\end{itemize}
Overall, there are $y^{2}+z\left(z-1\right)+\left(2k-1\right)$ possibilities
for the two letters following $v$. It is easy to see that under the
assumptions in the current case, this expression is largest when $y=2k-5$
and $z=4$. But even in this case, 
\[
y^{2}+z\left(z-1\right)+\left(2k-1\right)=\left(2k-5\right)^{2}+12+\left(2k-1\right)<\left(2k-3\right)^{2},
\]
 (note that if $y,z\geq4$ then $k\geq5$). This completes the proof.\\

\noindent \textbf{Case 2:} $\boldsymbol{|Y|=3}$:\label{case:y=00003D3}
~~~Assume first that $k\geq5$, so $2k-3\geq7$. As in the previous
proof, let $x\in X^{\pm1}$ be a letter in a word from $A_{Y,Z,v}$.
If $x\in Y$, the following letter is one of $y=3$ possibilities.
If $x\in Z$ there are at most $z=2k-4$ possibilities. Finally, if
$x=v$, then $v$ is followed by
\begin{itemize}
\item one of $3$ letters from $Y$, which are followed in turn by one of
$3$ letters, or 
\item one of $2k-5$ letters from $Z\setminus\left\{ v^{-1}\right\} $,
which are followed in turn by one of $2k-4$ letters, or 
\item $v$, which is followed by one of $2k-1$ letter.
\end{itemize}
Overall, there are $3^{2}+\left(2k-5\right)\left(2k-4\right)+\left(2k-1\right)$
possibilities for the two letters following $v$. This is strictly
less than $\left(2k-3\right)^{2}$ for $k\geq5$.

Suppose next that $k=4$, so now $2k-3=5$. Any $x\in X^{\pm1}\setminus\left\{ v\right\} $
is followed by at most $4$ possible letters. As for $v$ itself,
we need to distinguish between two cases: either $Y$ does not contain
a letter and its inverse, in which case w.l.o.g.~$Y=\left\{ a,b,c\right\} $
and $Z=\left\{ a^{-1},b^{-1},c^{-1},v^{-1}\right\} $; or w.l.o.g.~$Y=\left\{ a,a^{-1},b\right\} $
and $Z=\left\{ b^{-1},v^{-1},c^{\pm1}\right\} $. In the first case
an easy computation%
\footnote{Computation of this kind can be easily carried out in some Excel-type
spreadsheet program.%
} shows that $v$ is followed by one of $115$ possible triplets of
letters, and we are done as $115<5^{3}$. In the second case, $v$
is followed by one of 617 possible quadruplets, and $617<5^{4}$.

Finally, if $y=3$ and $k=3$ ($k$ cannot be smaller than $3$ if
$y=3$), then $2k-3=3$. The partition is, up to name changes, $Y=\left\{ a,a^{-1},b\right\} $
and $Z=\left\{ b^{-1},v^{-1}\right\} $. An easy computation shows
that any letter $x\ne v$ is followed by at most $8$ possible pairs
of letters, and $v$ is followed by at most $17{,}883$ possible $9$-tuples
(and $17{,}833<3^{9}=19{,}683$).%
\footnote{Alternatively, one can show that the exponential growth rate here
equals the largest real root of $\lambda^{5}-3\lambda^{4}+\lambda^{3}-\lambda^{2}-\lambda+7$,
which is about $2.68$.%
}\\

\noindent \textbf{Case 3: }$\boldsymbol{\left|Z\right|=3}$\label{case:z=00003D3}
~~~Assume first that $k\geq4$, hence $2k-3\geq5$. If $x\in Y$,
it is followed by the inverse of one of $2k-5$ letters from $Y\setminus\left\{ x\right\} $
or by $v^{-1}$, a total of $2k-4$ possibilities. A letter from $Z$
is followed by one of $3<2k-3$ letters ($v^{-1}$ and two letters
whose inverse belongs to $Z$). 

To analyze the number of possibilities after $v$, we distinguish
between two cases:
\begin{enumerate}
\item Assume that $Z$ contains a letter and its inverse, i.e. $Z=\left\{ v^{-1},a,a^{-1}\right\} $.
In this case,

\begin{itemize}
\item Every $x\in Y$, is followed either by one of $2k-5$ letters from
$Y\setminus\left\{ x^{-1}\right\} $ which are then followed by one
of $\left(2k-4\right)$ letters, or by $v^{-1}$ which is followed
by one of $3$ possible letters: a total of $\left(2k-5\right)\left(2k-4\right)+3=4k^{2}-18k+23$
possible pairs.
\item The letter $a$ is followed either by one of two letters from $Z$
($a$ or $v^{-1}$), which are in turn followed by one of $3$ possible
letters, or by $v$ which is followed by one of $2k-1$ letters: a
total of $2\cdot3+\left(2k-1\right)=2k+5$ possible pairs. The same
computation holds for $a^{-1}$.
\item Every pair of letters following $v$ starts either with some $x\in Z$
($2\cdot3$ options), by $v$ ($2k-1$ options) or by some $x\in Y$
($\left(2k-4\right)^{2}$ options): a total of $6+2k-1+\left(2k-4\right)^{2}=4k^{2}-14k+21$
possible pairs.
\end{itemize}

We can now count the number of possible triplets of letters following
$v$:
\begin{itemize}
\item $2\left(2k+5\right)$ triplets begin with $a$ or $a^{-1}$.
\item $\left(2k-4\right)\left(4k^{2}-18k+23\right)$ triplets begin with
some $x\in Y$.
\item $4k^{2}-14k+21$ triplets begin with $v$.
\end{itemize}

The total number of possible triplets following $v$ is, therefore,
$8k^{3}-48k^{2}+108k-61$ which is strictly less than $\left(2k-3\right)^{3}$
when $k\geq4$.

\item The other possibility is that $Z$ does not contain a letter and its
inverse, hence $Z=\left\{ v^{-1},a,b\right\} $. A similar computation
shows that $v$ is followed in this case by one of $8k^{3}-56k^{2}+160k-145$
which is again strictly less than $\left(2k-3\right)^{3}$ when $k\geq4$.
\end{enumerate}
Finally, if $k=3$ then $Z=\left\{ v^{-1},a,b\right\} $ and $Y=\left\{ a^{-1},b^{-1}\right\} $
(for otherwise $Y=\left\{ x,x^{-1}\right\} $). Another technical
computation shows that $a^{-1}$ and $b^{-1}$ are followed by at
most $2$ possible letters, $v^{-1}$ by at most $7$ possible pairs
of letters, $a$ and $b$ by at most $237<3^{5}$ $5$-tuples, and
$v$ by at most $41{,}372{,}449<3^{16}$ $16$-tuples of letters%
\footnote{Alternatively, the exponential growth rate here equals the largest
real root of $\lambda^{4}-2\lambda^{3}-4\lambda^{2}+2\lambda+7$,
which is about $2.85$.%
}.\\

\noindent \textbf{Case 4:} $\boldsymbol{\left|Y\right|=2}$\label{case:y=00003D2}
~~~Since $Y\neq\left\{ x,x^{-1}\right\} $, assume w.l.o.g.~that
$Y=\left\{ a,b\right\} $. The possibility $k=3$ was already dealt
with in the previous case, so assume $k\geq4$. Similar calculations
to those above show that:
\begin{itemize}
\item $a$ and $b$ are followed by one of $2<\left(2k-3\right)$ letters.
\item $a^{-1}$ and $b^{-1}$ are followed by one of $\left(4k^{2}-14k+16\right)<\left(2k-3\right)^{2}$
pairs of letters.
\item Any letter $x$ such that $x,x^{-1}\in Z$ is followed by one of $\left(4k^{2}-16k+21\right)<\left(2k-3\right)^{2}$
pairs of letters.
\item $v^{-1}$ is followed by one of $\left(4k^{2}-16k+19\right)<\left(2k-3\right)^{2}$
pairs of letters.
\item $v$ is followed by one of $\left(8k^{3}-44k^{2}+106k-91\right)$
triplets of letters. This is strictly less than $\left(2k-3\right)^{3}$
for $k\geq5$. For $k=4$, a concrete computation shows $v$ is followed
by one of $613<5^{4}$ possible $4$-tuples of letters.\\

\end{itemize}
\textbf{Case 5:} $\boldsymbol{\left|Z\right|=2}$\label{case:z=00003D2}~~~~Assume
w.l.o.g.~that $Z=\left\{ a,v^{-1}\right\} $. The case $k=3$ was
handled in the case $\left|Y\right|=3$, so assume $k\geq4$. Again,
the following formulas can be easily computed:
\begin{itemize}
\item $a$ and $v^{-1}$ are each followed by one of $2<\left(2k-3\right)$
letters.
\item $a^{-1}$ is followed by one of $\left(4k^{2}-14k+14\right)<\left(2k-3\right)^{2}$
pairs of letters.
\item Any letter $x$ such that $x,x^{-1}\in Y$ is followed by one of $\left(4k^{2}-16k+19\right)<\left(2k-3\right)^{2}$
pairs of letters.
\item $v$ is followed by one of $\left(8k^{3}-40k^{2}+80k-51\right)$ triplets
of letters. This is strictly less than $\left(2k-3\right)^{3}$ for
$k\geq6$. For $k=5$, a concrete computation shows $v$ is followed
by one of $1{,}951<7^{4}$ possible $4$-tuples of letters, and for
$k=4$, $v$ is followed by one of $557<5^{4}$ possible $4$-tuples
of letters.
\end{itemize}
This finishes the proof of Proposition \ref{prop:Every-triplet-except...-is-negligible}.
$\qed$

\section{Open Questions\label{sec:Open-Questions}}

Finally, we mention the following closely related questions which
are still open:

\begin{ques}What can be said about the growth of $\mathrm{Aut}\left(F_{k}\right)$
with respect to standard generating sets such as

i) Nielsen moves? 

ii) Whitehead automorphisms?\end{ques}

\begin{ques}What is the smallest possible exponential growth rate
of $P_{k,N}$ (or $C_{k,N}$) with respect to arbitrary finite generating
sets of $F_{k}$ (not necessarily bases)?\end{ques}

\begin{ques}What is the growth of $C_{k,N}\setminus L_{k,N}$? What
does a generic primitive cyclic element containing every letter at
least twice (or not at all) look like? Is it, up to permuting the
letters, of the form $x_{1}w\left(x_{1}x_{2},x_{3},x_{4},\ldots,x_{k}\right)$?
(In particular, the latter set of words shows that the growth of $C_{k,N}\setminus L_{k,N}$
is at least $\lambda_{k}$, the largest root of $\lambda^{3}-\left(2k-5\right)\lambda^{2}-\lambda-\left(2k-3\right)$,
which satisfies $\lambda_{k}\searrow2k-5$ as $k\to\infty$.)

\end{ques}

\begin{ques}What is the growth of other $\mathrm{Aut}\left(\F_{k}\right)$-orbits
in $\F_{k}$? Which orbits, other than that of the primitives, have
the largest growth? \\
We conjecture the following is true: For $w\in\F_{k}$, let $\mu\left(w\right)$
denote the minimal (positive) number of instances of some letter $x\in X$
in any element of the $\mathrm{Aut}\left(\F_{k}\right)$-orbit of
$w$ (this number does not depend on $x$). Then the growth of the
set of cyclic words in the orbit of $w$ is $\sqrt[\mu\left(w\right)]{2k-3}$.
If true, this shows that unless $w$ is primitive, most words in its
orbit are conjugates of small words, so that the growth of the orbit
is always $\sqrt{2k-1}$.\end{ques}

\bibliographystyle{amsalpha}
\bibliography{self,united}

\providecommand{\bysame}{\leavevmode\hbox to3em{\hrulefill}\thinspace}
\providecommand{\MR}{\relax\ifhmode\unskip\space\fi MR }
\providecommand{\MRhref}[2]{%
  \href{http://www.ams.org/mathscinet-getitem?mr=#1}{#2}
}
\providecommand{\href}[2]{#2}
\begin{thebibliography}{BMS02b}

\bibitem[BMS]{OpenProblems}
G.~Baumslag, A.~Myasnikov, and V.~Shpilrain, \emph{Open problems in
  {Combinatorial and Geometric Group Theory}}, http://www.grouptheory.info/.

\bibitem[BMS02a]{baumslag2002open}
\bysame, \emph{Open problems in {Combinatorial Group Theory.}}, Contemporary
  Mathematics \textbf{296} (2002), 1--38.

\bibitem[BMS02b]{BMS02}
A.V. Borovik, A.G. Myasnikov, and V.~Shpilrain, \emph{Measuring sets in
  infinite groups}, Computational and Statistical Group Theory (Las Vegas,
  NV/Hoboken, NJ), Contemp. Math., American Mathematical Society, 2002,
  pp.~21--42.

\bibitem[BV02]{BV02counting}
J.~Burillo and E.~Ventura, \emph{Counting primitive elements in free groups},
  Geometriae Dedicata \textbf{93} (2002), no.~1, 143--162.

\bibitem[LS70]{LS70}
R.C Lyndon and P.E. Schupp, \emph{Combinatorial {Group Theory}},
  Springer-Verlag, Berlin; New York, 1970.

\bibitem[MS03]{myasnikov2003automorphic}
A.~Myasnikov and V.~Shpilrain, \emph{Automorphic orbits in free groups},
  Journal of Algebra \textbf{269} (2003), no.~1, 18--27.

\bibitem[PP15]{PP15}
Doron Puder and Ori Parzanchevski, \emph{Measure preserving words are
  primitive}, Journal of American Mathematical Society \textbf{28} (2015),
  no.~1, 63--97.

\bibitem[Pud14a]{Pud14b}
Doron Puder, \emph{Expansion of random graphs: New proofs, new results}, arXiv
  preprint arXiv:1212.5216 (2014+).

\bibitem[Pud14b]{Pud14a}
\bysame, \emph{Primitive words, free factors and measure preservation}, Israel
  Journal of Mathematics \textbf{201} (2014), no.~1, 25--73.

\bibitem[Riv04]{rivin2004remark}
I.~Rivin, \emph{A remark on '{C}ounting primitive elements in free groups' (by
  {J. Burillo and E. Ventura})}, Geometriae Dedicata \textbf{107} (2004),
  no.~1, 99--100.

\bibitem[Shp05]{shpilrain2005counting}
V.~Shpilrain, \emph{Counting primitive elements of a free group}, Contemporary
  Mathematics \textbf{372} (2005), 91--98.

\bibitem[Sta99]{Sta99}
J.~R. Stallings, \emph{Whitehead graphs on handlebodies}, Geometric group
  theory down under (J.~Cossey, C.~F. Miller, W.D. Neumann, and M.~Shapiro,
  eds.), de Gruyter, Berlin, 1999, pp.~317--330.

\bibitem[Whi36a]{Whi36a}
J.H.C. Whitehead, \emph{On certain sets of elements in a free group}, Proc.
  London Math. Soc. \textbf{41} (1936), 48--56.

\bibitem[Whi36b]{Whi36b}
\bysame, \emph{On equivalent sets of elements in a free group}, Ann. of Math.
  \textbf{37} (1936), 768--800.

\end{thebibliography}

\noindent Doron Puder, \\
Einstein Institute of Mathematics, \\
Edmond J. Safra Campus, Givat Ram\\
The Hebrew University of Jerusalem\\
Jerusalem, 91904, Israel\\
doronpuder@gmail.com\\

\noindent Conan Wu, \\
Department of Mathematics, \\
Princeton University\\
Fine Hall, Washington Road \\
Princeton NJ 08544-1000 USA

\noindent shuyunwu@princeton.edu
\end{document}